\documentclass{amsart}

%%%%%%%%%%%%%%%%%%%%%%%%%%%%%%%%%%%%%
%%%% Packages %%%%%%%%%%%%%%%%%%%%%%

\usepackage[all]{xy}   % xypic package
\usepackage{hyperref, graphicx} %to generate pdf with links
\usepackage[usenames,dvipsnames]{xcolor}

\setlength{\parskip}{0.3\baselineskip}   % sets an extra space between paragraphs

%\newcommand{\martina}[1]{\textcolor{blue}{#1}}

%%%%%%%%%%%%%% Theorem style %%%%%%%%%%%%%

\theoremstyle{plain}
\newtheorem{lem}{Lemma}[section]
\newtheorem{cor}[lem]{Corollary}
\newtheorem{prop}[lem]{Proposition}
\newtheorem{thm}[lem]{Theorem}

\theoremstyle{definition}
\newtheorem{ex}[lem]{Example}
\newtheorem{rem}[lem]{Remark}
\newtheorem{dfn}[lem]{Definition}

%%%% Definitions   %%%%%%%%%%%%%

\newcommand{\Z}{\mathbb{Z}}               % integers
\newcommand{\Q}{\mathbb{Q}}              % rational numbers
\newcommand{\R}{\mathbb{R}}              % real numbers

   % equivariant coefficient ring
  % coefficient ring
   % localized equivariant ring

   % formal affine Demazure algebra
  % parabolic formal affine Demazure algebra

 % structure sheaf
   % sheaf
   % line bundle

\newcommand{\II}{\mathcal{I}}  % augmentation ideal

   % Set-theoretic maps

                   % delta
             % element kappa

             % oriented cohomology

%\newcommand{\xey}{x\,\text{---}\,y}

%%%%%%%%%%%%%%%%%%%%%%%%%%%%
%% Title/abstract/contents page

\begin{document}

\title[Twisted foldings]{Twisted quadratic foldings of root systems}

\author[M.~Lanini]{Martina Lanini}
\address[Martina Lanini]{Dipartimento di Matematica, 
Universit\`a di Roma Tor Vergata,
Via della Ricerca Scientifica,
00133 Rome,
Italy}
\email{lanini@mat.uniroma2.it}
\urladdr{https://sites.google.com/site/martinalanini5/home}

\author[K.~Zainoulline]{Kirill Zainoulline}
\address[Kirill Zainoulline]{Department of Mathematics and Statistics, University of Ottawa, 585 King Edward Street, Ottawa, ON, K1N 6N5, Canada}
\email{kirill@uottawa.ca}
\urladdr{http://mysite.science.uottawa.ca/kzaynull/}

\subjclass[2010]{14M15, 17B22, 20G41}
\keywords{folding of a root system, equivariant cohomology, structure algebra, icosian}

\begin{abstract} 
In the present paper we study twisted foldings of root systems which generalize usual involutive foldings corresponding to automorphisms of Dynkin diagrams.
Our motivating example is the Lusztig projection of the root system of type $E_8$ onto the subring of icosians of the quaternion algebra which gives the
root system of type $H_4$. 

Using moment graph techniques for any such folding we construct a map at the equivariant cohomology level. We show that this map commutes with characteristic classes and Borel maps. We also introduce and study its
restrictions to the usual cohomology of projective homogeneous varieties, to group cohomology 
and to their virtual analogues for finite reflection groups.\end{abstract}

\maketitle

\section{Introduction}

Consider a root system $\Phi$ together with its geometric realization in $\R^N$, that is
we look at $\Phi$ as a subset of vectors in $\R^N$ which is closed under
reflection operators $r_v$ for each $v\in \Phi$. 
A basic example of such a realization is given by the vectors
$$\Phi=\{\alpha_{ij}=e_i-e_j \in \R^{2n} \mid i\neq j,\; i,j=1\ldots 2n,\; n\ge 1\},$$ 
where $\{e_1,\ldots,e_{2n}\}$ is the standard
basis of $\R^{2n}$. This corresponds to the root system of Dynkin type $A_{2n-1}$. 
Note that $\R^{2n}$ admits an involutive symplectic linear operator $\tau(e_i)=-e_{2n-i+1}$ 
which preserves $\Phi$. Taking averages over orbits in $\Phi$ under $\tau$ one obtains a new subset of vectors 
\[\Phi_{\tau}=\{\tfrac{1}{2}(\alpha_{ij}+\tau(\alpha_{ij}))\mid \alpha_{ij}\in \Phi\}
\]
which turns out to be a geometric realization of the root system of Dynkin type~$C_n$.
There are other similar examples of root systems (such as $D_{n+1}$ and $E_6$) and involutive operators $\tau$ induced by automorphisms of the respective Dynkin diagrams.
The procedure of passing from $\Phi$ to $\Phi_\tau$ by taking averages via $\tau$
is called folding of a root system (see Steinberg~\cite{St67}).

In the present paper we are weakening the assumption $\tau^2=id$ by considering an arbitrary  linear automorphism $\mathcal{T}$ of $\R^N$
that satisfies a separable quadratic equation \[p(\mathcal{T})=\mathcal{T}^2-c_1\mathcal{T}+c_2=0,\quad c_1,c_2\in \R,\]
hence, dealing with the so called twisted (quadratic) foldings. 

Our motivating example is the celebrated projection of the root system of type $E_8$ onto the subset of icosians of the quaternion algebra which realizes a finite non-crystallographic root system of type $H_4$.
This twisted folding has been first noticed by Lusztig \cite{Lu83} while investigating square integrable representations of semisimple $p$-adic groups, and then studied by many authors, among them
by Moody-Patera \cite{MP} in the context of quasicrystals, by Shcherbak~\cite{Sh88} from the point of view of singularity theory, by Dyer~\cite{Dy} using the language of embeddings of root systems.
More precisely, in this case one considers an operator $\mathcal{T}$ that satisfies the quadratic equation $x^2-x-1=0$ of the golden section 
and then takes the projection onto an eigenspace. 

The key point of our work is to use the twisted foldings and moment graph techniques
to construct maps from equivariant cohomology with coefficients in a ring $k$ of flag varieties  
to their virtual analogue for finite Coxeter groups.
This follows Sorgel's philosophy to use virtual geometry to investigate combinatorics of Coxeter groups~\cite{Soe}.

As the first step we formalize the notion of a twisted folding over $k$ by looking at a root system $\Phi$ as a subset of  the Weil restriction $R_{l/k}\mathcal{U}$ of a free module $\mathcal{U}$ over $l$
where $l=k(x)/(p(x))$ is a quadratic separable algebra over an integral domain $k$. 
The operator $\mathcal{T}$ is the multiplication by a root $\tau$ of $p$ that preserves the root lattice of $\Phi$ and partitions the subset of simple roots of 
$\Phi$. 
We define the $\tau$-twisted folding  (Definition~\ref{dfn:taufolding}) to be the 
projection $\Phi\to \Phi_\tau$ of $\Phi$ to the respective 
$\tau$-eigenspace of~$\mathcal{T}$.  

Then we consider the moment graphs $\mathcal{G}$ and $\mathcal{G}_\tau$
associated to the root systems $\Phi$ and $\Phi_\tau$.
As in Fiebig~\cite{F1}, we introduce the structure algebras 
$\mathcal{Z}(\mathcal{G})$ and $\mathcal{Z}(\mathcal{G}_\tau)$
which play the role of (virtual) equivariant cohomology and their augmentations $\overline{\mathcal{Z}}(\mathcal{G})$ and $\overline{\mathcal{Z}}(\mathcal{G}_\tau)$ (the usual cohomology).
Our main result (Theorem~\ref{thm:mainthm}) says that the $\tau$-twisted folding $\Phi\to \Phi_\tau$ induces canonical maps 
\[
\iota^* \colon \mathcal{Z}(\mathcal{G}) \to \mathcal{Z}(\mathcal{G}_\tau)\quad\text{ and }\quad
\bar\iota^* \colon \overline{\mathcal{Z}}(\mathcal{G}) \to \overline{\mathcal{Z}}(\mathcal{G}_\tau)\]
between the (augmented) structure algebras which commute with forgetful maps.

We show that these map commute with characteristic classes and Borel maps 
(see Lemma~\ref{lem:charm}). We also provide several criteria for its surjectivity in positive degrees (see Corollary~\ref{cor:surjc} and Remark~\ref{rem:surj}). As an example, for the Lusztig folding $E_8\to H_4$
we obtain a ring homomorphism from the equivariant Chow ring of a certain $E_8$-flag variety to the much smaller structure algebra
associated to the Coxeter group $H_4$ (see Example~\ref{ex:mapH})
\[ \iota^*\colon CH_T(E_8/P_{D_6}; k) \longrightarrow \mathcal{Z}(H_4/P_{H_3};l),\quad k=\Z[\tfrac{1}{5}],\; l=k(\tau),\; \tau=\tfrac{1}{2}(1+\sqrt{5}).\]  
Observe that this map is surjective for $l=\mathbb{Q}(\tau)$ 
and while the left hand side is a free module of rank 30240, the right hand side has only rank 120.

At the end of the paper we focus on the study of quotient of the augmented structure algebra
modulo an ideal generated by characteristic classes. 
We call it the reduced structure algebra and denote it by $\widehat{\mathcal{Z}}(\mathcal{G})$. 
Observe that the reduced structure algebra (e.g. $\widehat{\mathcal{Z}}(\mathcal{G}_\tau)$)
can be defined for any finite reflection group (not necessarily crystallographic).

In the crystallographic case $\widehat{\mathcal{Z}}(\mathcal{G})$  coincides with the Chow ring $CH(G;k)$
of the adjoint form of the respective algebraic group $G$. 
The latter is a celebrated geometric invariant of $G$ which plays an important role in the invariant and representation theory of $G$.
For history and summary of computations of $CH(G;k)$ when $k=\Z$ or $k$ is a finite field of modular characteristic we refer to Kac~\cite{Ka85}.
For recent computations of $CH(G;k)$, $k=\Z$ for adjoint forms of exceptional groups we refer to Duan-Zhao~\cite{DZ15}.

As an application, for the Lusztig foldings we obtain ring homomorphisms between reduced structure algebras
\begin{align*}
\hat\iota^*_{E_8} &\colon CH(E_8;k)=\widehat{Z}(E_8;k) \longrightarrow \widehat{Z}(H_4;l), \\
\hat\iota^*_{D_6} &\colon CH(PGO_{12};k)=\widehat{Z}(D_6;k) \longrightarrow \widehat{Z}(H_3;l), \\
\hat\iota^*_{A_4} &\colon CH(PGL_5;k)=\widehat{Z}(A_4;k) \longrightarrow \widehat{Z}(H_2;l).
\end{align*}
Using these maps together with several natural assumptions (surjectivity and restrictions on $p$-exceptional degrees) we provide conjectural answers for the reduced structure algebras 
of the finite reflection groups $H_2$, $H_3$ and $H_4$.

Let us mention that in recent years foldings coming from finite group actions on Dynkin diagrams have been investigated from a Soergel bimodule perspective by Lusztig in an updated version of \cite{Lu14} and by Elias \cite{El16}, to categorify quasi-split Hecke algebras with unequal parameters. It would be interesting also to relate twisted foldings to Hecke algebras with unequal parameters via sheaves on moment graphs, in the spirit of  \cite{La12}.

\paragraph{\bf Acknowledgements} M.L. acknowledges the MIUR Excellence Department Project awarded to the Department of Mathematics, University of Rome Tor Vergata, CUP E83C18000100006. K.Z. was partially supported by the NSERC Discovery grant RGPIN-2015-04469, Canada.

\section{Eigenspace decomposition under quadratic base change}
 
Let $k$ be a subring of $\R$. 
Consider a  quadratic separable algebra $l$ over $k$ \cite[Ch.III, \S4]{Kn91} so that $l=k[x]/(p(x))$, where $p(x)$ is a monic quadratic separable polynomial over $k$.
Let $\tau$ and $\sigma$ denote the roots of this polynomial in $l$, so that
\[p(x)=x^2-c_1x+c_2=(x-\tau)(x-\sigma),\]
where $c_1=\tau+\sigma\in k$ and $c_2=\tau\sigma \in k$.
Then $l=k(\tau)$ as a free module of rank 2 over $k$ with the $\tau$-basis $\{1,\tau\}$. 
So each element of $l$ can be written uniquely as $y+\tau y'$, $y,y'\in k$ which we denote as $(y,y')$.
The ring multiplication in $l$ considered over $k$ is then given by
 \[
(y,y')\cdot(\hat y,\hat y')=\big(y\hat y - c_2 y'\hat y',y\hat y'+y'\hat y+c_1 y'\hat y'\big).
 \]

It is well-known that such an algebra $l$ is either isomorphic to a product $k\times k$ (split case)
or it is a nontrivial twisted form of $k\times k$ (non-split case). 

We always assume that both the free term $c_2$ (hence, $\tau$ and $\sigma$) and the discriminant $D=(\tau-\sigma)^2=c_1^2-4c_2$ (hence, $\tau-\sigma$)
are invertible in $k$.

The split case corresponds (up to an isomorphism) to the polynomial $p(x)=x^2-1$ ($c_1=0$ and $c_2=-1$). 
Whenever we deal with the split case we always fix the roots in $l$ as $\tau=(0,1)$ and $\sigma=(0,-1)$ and assume that $2$ is invertible in $k$.

In the non-split case we additionally assume that $l$ is also a subring of $\R$ so that $\tau$ is a real root and $D>0$.

\begin{ex}

If we write elements of $l=k\times k$ in terms of the standard basis as pairs $[z,z']$, $z,z'\in k$ then the unit  in $l$ is $1=[1,1]=(1,0)$ and 
the roots are $\tau=[1,-1]$, $\sigma=[-1,1]$. For an element of $l$ we have 
\[(y,y')= [y+y',y-y']\quad\text{ and }\quad [y,y']=\tfrac{1}{2}(y+y',y-y'),\;  y,y'\in k.\] 
For the ring multiplication in $l$ we obtain
\[(y,y')\cdot (\hat y,\hat y')=\big(y\hat y + y'\hat y',y\hat y'+y'\hat y\big)\quad\text{ and }\quad
[y,y']\cdot [\hat y, \hat y']=[y\hat y,y'\hat y'].
\]
\end{ex} 

\begin{ex}\label{ex:golden} Consider a case where $k=\Q$ and $p(x)=x^2-x-1$ ($c_1=1$, $c_2=-1$) is an irreducible quadratic polynomial.
Then $l$ is the unique intermediate quadratic Galois extension of the cyclotomic extension $\Q(\zeta_5)/\Q$. 
We can also take $k=\Z$ and $l$ to be the ring of integers $\Z(\tau)$ of $l$. 
Observe that both examples are non-split cases and we have $\tau=\tfrac{1}{2}(1+\sqrt{5})$ and $\sigma=\tfrac{1}{2}(1-\sqrt{5})$ 
(note that $\tau$ is the so called golden section).
\end{ex}

Consider a free module $\mathcal{U}$ of rank $n$ over $l$. Each element in $\mathcal{U}$ is an $n$-tuple $(x_1,\ldots,x_n)$ where $x_i\in l$.
 We denote by $U$ the module $\mathcal{U}$ viewed as a free module over $k$ by expressing each $x_i$ in the basis $\{1,\tau\}$ (observe that $l$ is a 2-dimensional vector space over $k$, so $U$ has dimension $2n$ over $k$). We will call such $U$ the Weil restriction of $\mathcal{U}$ with respect to the ring extension $l/k$.
Each element $u\in U$ can be written as 
 \begin{equation}\label{eq:Uelement}u=((y_1,y_1'),(y_2,y_2'),\ldots,(y_n,y_n'))\end{equation} where $x_i=y_i+\tau y_i'$ and $y_i,y_i'\in k$. 
 The element $u\in U$ viewed as an element in $\mathcal{U}$ will be denoted as $u_l$.
 We identify the submodule $((y_1,y_1'),(0,0),\ldots)$ with $l$, and the submodule $((y_1,0),(0,0),\ldots)$ with $k$.

 We introduce a symmetric $k$-bilinear form $B_\tau (\cdot,\cdot)$ on $U$ by taking
 the standard $l$-bilinear dot product $(-\cdot-)$ in $\mathcal{U}$
 and then the $k$-linear projection $pr_\tau\colon l\to k$, $(y,y')\mapsto y$, i.e., \[B_\tau(u, \hat u):=pr_\tau(u_l\cdot \hat u_l),\quad u,\hat u\in U.\] 
 We call it the $\tau$-form on $U$. Similarly, we can define the form $B_\sigma(\cdot,\cdot)$. 
By definition, we have
 \[
 B_\tau\big( (\ldots,(y_i,y_i'),\ldots),  (\ldots,(\hat y_i,\hat y_i'),\ldots) \big)=\sum_{i=1}^n \big(y_i\hat y_i -c_2 y_i'\hat y_i'\big)
 \]
 which implies the following
 \begin{lem}\label{lem:dotpr}
 The $\tau$-form (resp. $\sigma$-form) on $U$ coincides with the standard $k$-bilinear dot product on $U$
 if and only if $c_2=\tau\sigma=-1$.
 
 In particular it holds in the split case and in the non-split case of Example~\ref{ex:golden}.
 \end{lem}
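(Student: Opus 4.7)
The plan is to prove the lemma by a direct comparison of explicit coordinate formulas for the two bilinear forms, using the identity for $B_\tau$ stated just above the lemma. Writing a general pair of elements $u = ((y_1,y_1'),\ldots,(y_n,y_n'))$ and $\hat u = ((\hat y_1,\hat y_1'),\ldots,(\hat y_n,\hat y_n'))$ in the $k$-basis of $U$ coming from the $\tau$-basis $\{1,\tau\}$ of $l$, the standard $k$-bilinear dot product is by definition $\sum_{i=1}^n (y_i\hat y_i + y_i'\hat y_i')$. On the other hand, the displayed formula just preceding the lemma gives $B_\tau(u,\hat u) = \sum_{i=1}^n (y_i\hat y_i - c_2 y_i'\hat y_i')$.

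Equating these two symmetric bilinear forms coefficient by coefficient (they already agree on the ``diagonal'' $y_i\hat y_i$ terms, so the only content is in the $y_i'\hat y_i'$ terms), they coincide if and only if $-c_2 = 1$, i.e.\ $c_2 = \tau\sigma = -1$. This gives the equivalence for the $\tau$-form; the $\sigma$-form case is entirely symmetric, obtained by repeating the derivation of the preceding formula with the roles of $\tau$ and $\sigma$ interchanged (using the $\sigma$-basis $\{1,\sigma\}$ of $l$), yielding the same condition on $c_2 = \tau\sigma$.

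For the ``in particular'' part, both special cases are immediate from our standing conventions: in the split case the polynomial was fixed to be $p(x) = x^2 - 1$, so $c_1 = 0$ and $c_2 = -1$; in Example~\ref{ex:golden} the polynomial is $p(x) = x^2 - x - 1$, so again $c_2 = -1$. Hence the hypothesis of the equivalence is satisfied in both cases.

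There is essentially no hard step here—the argument is a one-line coordinate check, given that the formula $B_\tau(u,\hat u) = \sum_i(y_i\hat y_i - c_2 y_i'\hat y_i')$ has already been derived from the ring multiplication table on $l$. The only minor point to be careful about is the convention for the ``standard $k$-bilinear dot product'' on $U$: it is understood to be the one associated to the distinguished $k$-basis of $U$ induced by the $\tau$-basis $\{1,\tau\}$ of $l$, as used throughout the section.
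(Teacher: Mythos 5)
Your proof is correct and follows exactly the route the paper intends: the displayed coordinate formula $B_\tau(u,\hat u)=\sum_i(y_i\hat y_i-c_2y_i'\hat y_i')$ immediately preceding the lemma is compared term by term with the standard dot product $\sum_i(y_i\hat y_i+y_i'\hat y_i')$, giving the condition $-c_2=1$, and the two special cases follow from the fixed choices $p(x)=x^2-1$ and $p(x)=x^2-x-1$. The paper itself treats the lemma as an immediate consequence of that formula, so there is nothing to add.
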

 
 We now introduce a key object of the present section
\begin{dfn} Consider an $l$-linear automorphism of $\mathcal{U}$ given by multiplication by~$\tau \in l$, i.e, 
\[
(\ldots,x_i,\ldots)\mapsto (\ldots,\tau\cdot x_i,\ldots),\quad x_i\in l.
\]
Viewed as a $k$-linear automorphism of $U$ it maps
\[
(\ldots, (y,y')_i,\ldots) \mapsto (\ldots, (-c_2y', y+c_1y')_i,\ldots).
\]
We denote it by $\mathcal{T}$ and call it the $\tau$-operator. By definition we have for each $u\in U$
\[(\mathcal{T}u)_l=\tau u_l\in\mathcal{U}. \]
\end{dfn}

Consider the base change $U_l=U\otimes_k l$ and the respective $\tau$-operator 
$\mathcal{T}_l\colon U_l\to U_l$ extended linearly over $l$ 
(here we consider the coordinates $y_i$, $y_i'$ in \eqref{eq:Uelement} as taken from~$l$).
(Observe that $\mathcal{T}_l(v)$ is not necessarily $\tau v$ for $v\in U_l$.)
By definition, the operator 
$\mathcal{T}_l$ has eigenvalues $\tau$ and $\sigma$ in $l$ and the corresponding eigenspaces can be described as follows.

By definition we have
\begin{align*}
\mathcal{T}_l(x,x')=\tau(x,x')\;&\Longleftrightarrow\; (-c_2 x', x+c_1x')=(\tau x,\tau x') \\
&\Longleftrightarrow\; -c_2 x'=\tau x\text{ and }x+c_1x'=\tau x'\\
&\Longleftrightarrow\; -c_2 x'=\tau x\text{ and }x=-\sigma x'.
\end{align*}
Since $\tau x=\tau(-\sigma x')=-c_2 x'$, the eigenspace for $\tau$ is given by
\[
U_\tau=\{(\ldots,(-\sigma x',x'),\ldots)\mid x'\in l\}.
\]
Similarly, the eigenspace for $\sigma$ is given by
\[
U_\sigma=\{(\ldots,(-\tau x',x'),\ldots)\mid x'\in l\}.
\]

\begin{lem}\label{lem:eigensp} There is a direct sum decomposition of $l$-modules
$U_l=U_\tau\oplus U_\sigma$ given by
\[
(\ldots,(x,x')_i,\ldots)=(\ldots,(-\sigma a,a)_i,\ldots)+(\ldots,(-\tau b,b)_i,\ldots),
\]
where
$a=\tfrac{1}{\tau-\sigma}(x+\tau x')$, $b=\tfrac{1}{\sigma-\tau}(x+\sigma x')$ and $x,x'\in l$.
\end{lem}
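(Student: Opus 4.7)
The plan is to reduce the statement to the single-coordinate case ($n=1$), since both the operator $\mathcal{T}_l$ and the candidate subspaces $U_\tau$, $U_\sigma$ are defined entrywise on $n$-tuples. Since the $\tau$- and $\sigma$-eigenspaces of the $l$-linear operator $\mathcal{T}_l$ are automatically $l$-submodules of $U_l$, it remains only to establish the decomposition as a direct sum of $l$-modules. Concretely, the reduction amounts to proving: for every $(x,x')\in l\oplus l$, there exist unique $a,b\in l$ with $(x,x')=(-\sigma a,a)+(-\tau b,b)$.

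For existence, I would set up and solve the linear system obtained by matching coordinates, namely $a+b=x'$ and $-\sigma a-\tau b=x$. Eliminating $b=x'-a$ and substituting into the second equation yields $(\tau-\sigma)a=x+\tau x'$. By the standing assumption at the start of the section, the discriminant $D=(\tau-\sigma)^2$ is invertible in $k$, so $\tau-\sigma$ itself is invertible in $l$; this gives $a=\tfrac{1}{\tau-\sigma}(x+\tau x')$, and then $b=x'-a=\tfrac{1}{\sigma-\tau}(x+\sigma x')$, matching the formulas in the statement.

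For uniqueness, which is equivalent to $U_\tau\cap U_\sigma=0$, suppose an element lies in both summands, so $(-\sigma a,a)=(-\tau b,b)$. The second coordinate forces $a=b$, and then the first gives $(\tau-\sigma)a=0$, whence $a=b=0$ again by invertibility of $\tau-\sigma$. Combined with the existence step this yields the desired internal direct sum $U_l=U_\tau\oplus U_\sigma$.

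The entire argument is routine $2\times 2$ linear algebra over $l$, applied one coordinate at a time; the only conceptual input is the invertibility of $\tau-\sigma$, built into the setup. Consequently I do not anticipate any serious obstacle, and the explicit formulas for $a$ and $b$ will drop out automatically from solving the system.
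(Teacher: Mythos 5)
Your argument is correct and matches the paper's proof, which likewise solves the linear system $x=-\sigma a-\tau b$, $x'=a+b$ by eliminating $b=x'-a$ and invoking the invertibility of $\tau-\sigma$ (the paper's chain of equivalences packages existence and uniqueness together, where you treat them separately). No issues.
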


\begin{proof}
We have
\begin{align*}
x=-\sigma a -\tau b,\; x'=a+b\;&\Longleftrightarrow\; x=-\sigma a-\tau(x'-a),\; b=x'-a\\
&\Longleftrightarrow\; a=\tfrac{1}{\tau-\sigma}(x+\tau x'),\; b=\tfrac{1}{\sigma-\tau}(x+\sigma x').\qedhere
\end{align*}
\end{proof}

We will extensively use the following description of eigenspaces of the $\tau$-operator
\begin{prop}\label{ex:coreig} We have 
\[U_\tau=(\mathcal{T}-\sigma I_l)(U)\;\text{ and }\;U_\sigma=(\mathcal{T}-\tau I_l)(U),\] 
where $U$ is considered as a $k$-submodule of $U_l$.

Moreover, for any $z\in U$ we have $z=z_\tau+z_\sigma$, where
\[
z_\tau=\tfrac{1}{\tau-\sigma}(\mathcal{T}-\sigma I_l)(z)\in U_\tau\;\text{ and }\; z_\sigma=\tfrac{1}{\sigma-\tau}(\mathcal{T}-\tau I_l)(z)\in U_\sigma.
\]
\end{prop}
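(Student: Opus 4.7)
The plan is to exploit the $l$-module decomposition $U_l = U_\tau \oplus U_\sigma$ established in Lemma~\ref{lem:eigensp}, and the fact that $\tau - \sigma$ is a unit in $l$ (guaranteed by the invertibility of the discriminant $D$). The first thing to note is that by construction the operator $\mathcal{T}_l - \sigma I_l$ acts on $U_\tau$ as multiplication by the unit $\tau - \sigma$ and annihilates $U_\sigma$ (and symmetrically with $\tau, \sigma$ swapped). Hence for any $z \in U_l$ with decomposition $z = z_\tau + z_\sigma$, we have
\[
(\mathcal{T}_l - \sigma I_l)(z) = (\tau - \sigma)\, z_\tau \in U_\tau.
\]
Dividing by $\tau - \sigma$ immediately gives the formula $z_\tau = \tfrac{1}{\tau - \sigma}(\mathcal{T} - \sigma I_l)(z)$; the formula for $z_\sigma$ is obtained by swapping $\tau$ and $\sigma$.

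For the image statements $U_\tau = (\mathcal{T} - \sigma I_l)(U)$ and $U_\sigma = (\mathcal{T} - \tau I_l)(U)$, the inclusions $\supseteq$ and $\subseteq$ are handled separately. The inclusion $(\mathcal{T}-\sigma I_l)(U) \subseteq U_\tau$ is immediate from the computation above applied to $z \in U \subset U_l$. For the reverse inclusion, I would use the explicit formula from Lemma~\ref{lem:eigensp}: for $z = (\ldots,(y_i,y_i'),\ldots) \in U$ with $y_i, y_i' \in k$, the corresponding $\tau$-component has coordinate $a_i = \tfrac{1}{\tau-\sigma}(y_i + \tau y_i')$. As $(y_i,y_i')$ ranges over $k \times k$, the quantity $y_i + \tau y_i'$ ranges over all of $l = k \oplus \tau k$ (since $\{1,\tau\}$ is a $k$-basis of $l$), and multiplying by the unit $\tfrac{1}{\tau-\sigma}$ shows that $a_i$ ranges over all of $l$. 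Thus the projection $U \to U_\tau$, $z \mapsto z_\tau$, is surjective, and multiplying by the unit $\tau - \sigma$ we obtain $(\mathcal{T}-\sigma I_l)(U) = U_\tau$. The argument for $U_\sigma$ is identical.

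The main subtle point I anticipate is the surjectivity of the projection $U \to U_\tau$: one must notice that although $U$ consists of elements with $k$-valued coordinates, the very choice of the $\tau$-basis $\{1,\tau\}$ for $l/k$ forces $y_i + \tau y_i'$ to sweep out all of $l$, producing arbitrary $l$-valued eigenvectors. Beyond this, the proof is a direct verification whose only other ingredient is the invertibility of $\tau - \sigma$.
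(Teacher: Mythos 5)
Your proposal is correct and follows essentially the same route as the paper: both arguments rest on Lemma~\ref{lem:eigensp} together with the key observation that $y+\tau y'$ sweeps out all of $l$ as $(y,y')$ ranges over $k\times k$, which gives surjectivity of $z\mapsto z_\tau$ onto $U_\tau$. The only cosmetic difference is that the paper verifies $(\mathcal{T}-\sigma I_l)(z)\in U_\tau$ by an explicit coordinate computation, whereas you deduce it from the eigenvalue action of $\mathcal{T}_l-\sigma I_l$ on the two summands; the substance is identical.
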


\begin{proof} We consider only the $l$-linear operator $\mathcal{T}-\sigma I_l$. Similar arguments can be applied to $\mathcal{T}-\tau I_l$.

For any $(\ldots, (y,y'),\ldots)\in U\subset U_l$ (here $y,y'\in k\subset l$) we have
\begin{align*}
(\mathcal{T}-\sigma I_l)(\ldots, (y,y'),\ldots)&=(\ldots, (-c_2y'-\sigma y,y+c_1y'-\sigma y'),\ldots) \\
&=(\ldots,(-\sigma(y+\tau y'),y+\tau y'),\ldots)) \in U_l,
\end{align*}
where $y+\tau y'$ represents an arbitrary element of $l$ in the basis $\{1,\tau\}$. Set $z=(\ldots, (y,y'),\ldots)$
and apply Lemma~\ref{lem:eigensp}. 

We then obtain $z_\tau=(\ldots,(-\sigma a,a),\ldots)$, where $a=\tfrac{1}{\tau-\sigma}(y+\tau y')\in l$ and the result follows.
\end{proof}

\begin{ex} In the split case, the $\tau$-operator over $l$ is the switch involution
\[
\mathcal{T}_l\colon (\ldots, (x,x')_i,\ldots) \mapsto (\ldots, (x', x)_i, \ldots), \quad x_i,x_i'\in l.
\]
Moreover, we have
\[
a=\tfrac{1}{\tau-\sigma}(x+\tau x')=\tfrac{1}{-2\sigma}(x+\tau x')=\tfrac{1}{2}(x'+\tau x),
\]
\[
b=\tfrac{1}{\sigma-\tau}(x+\sigma x')=\tfrac{1}{-2\tau}(x+\sigma x')=\tfrac{1}{2}(x'-\tau x)
\]
So the eigenspace decomposition of the $i$-th coordinate over $l$ is given by
\begin{align*}
(x,x') &=(-\sigma a,a)+(-\tau b,b)\\
&=\tfrac{1}{2}(x+\tau x', x'+\tau x)+\tfrac{1}{2}(x-\tau x',x'-\tau x) 
\end{align*}
\end{ex}

\begin{lem}\label{lem:adjoint}
The $\tau$-operator $\mathcal{T}$ is adjoint, i.e., $B_\tau(z,\mathcal{T}(z'))=B_\tau(\mathcal{T}(z),z')$.
\end{lem}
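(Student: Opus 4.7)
The statement is essentially immediate from the definitions, so my plan is just to unwind them in the right order. The key observation is that the underlying product on $\mathcal{U}$ is the standard $l$-bilinear dot product, and multiplication by $\tau \in l$ can be moved across either factor of such a form; the projection $pr_\tau$ then does not see which side it came from.

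Concretely, I would proceed as follows. First, I recall that by definition $B_\tau(u,\hat u) = pr_\tau(u_l \cdot \hat u_l)$, where $(-\cdot-)$ denotes the standard $l$-bilinear dot product on $\mathcal{U}$, and that the $\tau$-operator $\mathcal{T}$ was defined precisely so that $(\mathcal{T}u)_l = \tau\cdot u_l$ in $\mathcal{U}$. Therefore
\[
B_\tau(z,\mathcal{T}(z')) = pr_\tau\bigl(z_l \cdot (\mathcal{T}z')_l\bigr) = pr_\tau\bigl(z_l \cdot (\tau z'_l)\bigr).
\]
By $l$-bilinearity of the dot product on $\mathcal{U}$, the scalar $\tau$ may be pulled out and then pushed onto the first factor:
\[
pr_\tau\bigl(z_l \cdot (\tau z'_l)\bigr) = pr_\tau\bigl(\tau\,(z_l\cdot z'_l)\bigr) = pr_\tau\bigl((\tau z_l)\cdot z'_l\bigr) = pr_\tau\bigl((\mathcal{T}z)_l \cdot z'_l\bigr) = B_\tau(\mathcal{T}(z),z').
\]
This chain of equalities is the whole proof.

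There is really no obstacle here; the only thing worth emphasising (and what I would flag explicitly in the write-up) is that the argument relies on the $l$-bilinearity of the dot product on $\mathcal{U}$, not merely its $k$-bilinearity, because one needs to move the specific scalar $\tau \in l\setminus k$ from one factor to the other before projecting. If a reader prefers a coordinate check, one could alternatively verify the identity componentwise using the explicit formula $(y,y')\cdot(-c_2\hat y',\hat y+c_1\hat y') = (-c_2(y\hat y'+y'\hat y+c_1y'\hat y'),\ldots)$ arising from the ring multiplication in $l$, but the coordinate-free version above is cleaner and I would present that one.
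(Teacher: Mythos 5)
Your proof is correct and is essentially the same one-line argument the paper gives: both move the scalar $\tau$ across the $l$-bilinear dot product on $\mathcal{U}$ before applying $pr_\tau$. Your remark that $l$-bilinearity (not just $k$-bilinearity) is what makes this work is a fair point of emphasis, but the substance is identical.
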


\begin{proof} By definition of the dot-product we have
\[
B_\tau(z,\mathcal{T}(z'))=pr_\tau(z\cdot_l \tau z')=pr_\tau(\tau z\cdot_l z')=B_\tau(\mathcal{T}(z),z').\qedhere
\]
\end{proof}

Consider the $\tau$-form $B_\tau(\cdot,\cdot)$ on $U$ and extend it to $U_l$ over $l$ by linearity.
\begin{lem} 
The eigenspace decomposition $U_l=U_\tau\oplus U_\sigma$ is an orthogonal sum decomposition with respect to $B_\tau$ over $l$.
\end{lem}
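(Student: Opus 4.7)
The plan is to reduce the orthogonality of the two eigenspaces to the self-adjointness of $\mathcal{T}$ proven in Lemma~\ref{lem:adjoint}, combined with the standing assumption that the discriminant $\tau-\sigma$ is invertible.

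First, I would extend the self-adjointness statement from $U$ to $U_l$: since both $B_\tau$ and $\mathcal{T}$ are extended to $U_l$ by $l$-linearity, and the identity $B_\tau(z,\mathcal{T}(z'))=B_\tau(\mathcal{T}(z),z')$ holds on the generating set $U\subset U_l$, it continues to hold for all elements of $U_l$ by bilinearity. So $\mathcal{T}_l$ is self-adjoint with respect to the $l$-bilinear extension of $B_\tau$.

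Next I would run the standard eigenvector-orthogonality argument. Take $u\in U_\tau$ and $v\in U_\sigma$, so that $\mathcal{T}_l(u)=\tau u$ and $\mathcal{T}_l(v)=\sigma v$. Then
\[
\tau B_\tau(u,v)=B_\tau(\mathcal{T}_l(u),v)=B_\tau(u,\mathcal{T}_l(v))=\sigma B_\tau(u,v),
\]
so $(\tau-\sigma)B_\tau(u,v)=0$ in $l$. Since $\tau-\sigma$ is invertible in $k$ (and therefore in $l$) by the standing assumption on the discriminant, we conclude $B_\tau(u,v)=0$. Combined with the direct sum decomposition $U_l=U_\tau\oplus U_\sigma$ from Lemma~\ref{lem:eigensp}, this gives exactly that the decomposition is orthogonal with respect to $B_\tau$.

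There is no real obstacle here; the only minor subtlety is making sure that the self-adjointness identity, originally established for $z,z'\in U$, is legitimately used for eigenvectors living in the extension $U_l$. This is handled by the $l$-bilinear extension, which preserves the identity because it is bilinear in both arguments. Everything else reduces to invertibility of $\tau-\sigma$, which is built into the setup.
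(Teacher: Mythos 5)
Your proof is correct, but it follows a different route from the paper's. You run the standard argument that eigenvectors of a self-adjoint operator for distinct eigenvalues are orthogonal: extend the adjointness of Lemma~\ref{lem:adjoint} to $U_l$ by $l$-bilinearity (legitimate, since $U$ spans $U_l$ over $l$ and both sides of the identity are $l$-bilinear), then deduce $(\tau-\sigma)B_\tau(u,v)=0$ and divide by the invertible element $\tau-\sigma$. Your one point of care — that $\mathcal{T}_l(v)=\tau v$ need not hold for general $v\in U_l$ but does hold by definition for $v\in U_\tau$ — is exactly right. The paper instead uses the parametrization $U_\tau=(\mathcal{T}-\sigma I_l)(U)$, $U_\sigma=(\mathcal{T}-\tau I_l)(U)$ from Proposition~\ref{ex:coreig} and computes $B_\tau\bigl((\mathcal{T}-\sigma I_l)(z),(\mathcal{T}-\tau I_l)(z')\bigr)$ directly, reducing it via $pr_\tau$ to $pr_\tau\bigl(p(\tau)(z_l\cdot z_l')\bigr)=0$; in effect it exhibits the vanishing as coming from the minimal polynomial $p(\mathcal{T})=0$ rather than from cancelling $\tau-\sigma$. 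Your version is shorter and more conceptual, and isolates exactly where the invertibility of the discriminant is used; the paper's version stays entirely within explicit $pr_\tau$ computations and avoids dividing by $\tau-\sigma$ at this step. Both are valid.
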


\begin{proof}
Suppose $u\in U_\tau$ and $u'\in U_\sigma$. 
Then by Proposition~\ref{ex:coreig} $u=(\mathcal{T}-\sigma I_l)(z)$ and $u'=(\mathcal{T}-\tau I_l)(z')$ for some $z,z'\in U \subset U_l$.
So we obtain
\begin{align*}
B_\tau \big((\mathcal{T}-\sigma I_l)(z),(\mathcal{T}-\tau I_l)(z')\big) &= \\
B_\tau(\mathcal{T}(z),\mathcal{T}(z'))-\tau B_\tau(\mathcal{T}(z),z')- \sigma B_\tau(z, \mathcal{T}(z'))+\tau\sigma B_\tau (z,z') &= \\
pr_\tau\big(\tau^2(z_l\cdot z_l')\big)-\tau pr_\tau \big( \tau (z_l\cdot z_l') \big)-\sigma pr_\tau \big( \tau (z_l\cdot z_l') \big)+\tau\sigma pr_\tau(z_l\cdot z_l') &= \\
pr_\tau\big(\tau^2(z_l\cdot z_l')\big)- c_1 pr_\tau \big( \tau (z_l\cdot z_l') \big)+ c_2 pr_\tau(z_l\cdot z_l') &= \\
pr_\tau(p(\tau)(z_l\cdot z_l')) &=0.\qedhere
\end{align*}
\end{proof}

Consider the composite 
$\pi_\tau\colon U\to U_l\to U_\tau$
where the first map is the base change and the second one is the orthogonal projection
with respect to $B_\tau$. Following Proposition~\ref{ex:coreig} it is given by 
\[\pi_\tau(u)=\tfrac{1}{\tau-\sigma}(\mathcal{T}-\sigma I_l)(u),\; u\in U.\]
If we view $U$ as the module $\mathcal{U}$ over $l$, then
$\pi_\tau$ induces an $l$-linear isomorphism 
\[\pi_\tau\colon \mathcal{U} \stackrel{\simeq}\to U_\tau,\qquad u_l=(\ldots,x_i,\ldots)\mapsto \tfrac{1}{\tau-\sigma}(\ldots,(-\sigma x_i,x_i),\ldots).
\]
Since $B_\tau$ extended over $l$ is defined by
\[
B_\tau\big((\ldots,(x,x')_i,\ldots),(\ldots,(\hat x,\hat x')_i,\ldots)\big)=\sum_{i} (x\hat x -c_2 x'\hat x'),
\]
we obtain
\begin{align*}
B_\tau(\pi_\tau(u),\pi_\tau(\hat u)) &= \tfrac{1}{(\tau-\sigma)^2} B_\tau\big( (\ldots,(-\sigma x_i,x_i),\ldots),(\ldots,(-\sigma \hat x_i,\hat x_i),\ldots)\big)\\
&= \tfrac{\sigma^2-c_2}{D}\sum_i x_i\hat x_i=\tfrac{c_1\sigma-2c_2}{D}(u_l\cdot \hat u_l).
\end{align*}
Observe that in the split case ($c_1=0$ and $c_2=-1$) we have \[B_\tau(\pi_\tau(u),\pi_\tau(\hat u))=\tfrac{1}{2}(u_l\cdot u_l).\]

Finally, we will need the following
\begin{lem}\label{lem:normrat}
Let $u\in U$ be such that $(u_l\cdot u_l)\in k$. Then
\begin{itemize}
\item[(1)]
$B_\tau(u,\mathcal{T}u)=0$ and 
\item[(2)] $B_\tau(\mathcal{T}u,\mathcal{T}u)=-c_2 B_\tau(u,u)=-c_2 (u_l\cdot u_l)$.
\item[(3)] Moreover, if $(u_l\cdot u_l)\neq 0$, then $u\neq \mathcal{T}u$.
\end{itemize}
\end{lem}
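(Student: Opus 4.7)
The plan is to unwind the definition of $B_\tau$ through $pr_\tau$ and use the key identity $(\mathcal{T}u)_l = \tau u_l$, turning all three statements into simple computations inside $l$ once we note that $(u_l\cdot u_l)$ lives in the $k$-submodule $\{(y,0)\mid y\in k\}\subset l$.

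First, to prove (1), I would write $(u_l\cdot u_l)=(y,0)$ with $y=(u_l\cdot u_l)\in k$ and compute
\[
B_\tau(u,\mathcal{T}u)=pr_\tau\bigl(u_l\cdot (\tau u_l)\bigr)=pr_\tau\bigl(\tau\,(u_l\cdot u_l)\bigr)=pr_\tau(\tau\cdot(y,0)).
\]
Since $\tau=(0,1)$ and by the multiplication rule in $l$ we have $(0,1)\cdot(y,0)=(0,y)$, the projection $pr_\tau$ kills it, which gives (1).

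For (2), the same kind of manipulation yields $B_\tau(\mathcal{T}u,\mathcal{T}u)=pr_\tau(\tau^2(u_l\cdot u_l))$. Substituting the quadratic relation $\tau^2=c_1\tau-c_2$ gives $\tau^2\cdot(y,0)=(-c_2 y,c_1 y)$, whose $pr_\tau$-image is $-c_2 y$. Since $B_\tau(u,u)=pr_\tau((y,0))=y=(u_l\cdot u_l)$, this is exactly $-c_2 B_\tau(u,u)=-c_2(u_l\cdot u_l)$.

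Finally, (3) is essentially a corollary of (1): if $u=\mathcal{T}u$ then $B_\tau(u,u)=B_\tau(u,\mathcal{T}u)=0$ by part (1), but $B_\tau(u,u)=(u_l\cdot u_l)\neq 0$ by hypothesis, a contradiction. No serious obstacle arises here; the only thing to be careful about is the bookkeeping between the $\tau$-basis $\{1,\tau\}$ of $l$ and the multiplication formula recorded in the beginning of the section, together with the identification $k\hookrightarrow l$, $y\mapsto(y,0)$ used to say $(u_l\cdot u_l)\in k$.
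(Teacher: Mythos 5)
Your proposal is correct and follows essentially the same route as the paper: reduce everything to $pr_\tau$ applied to $\tau(u_l\cdot u_l)$ and $\tau^2(u_l\cdot u_l)$ using $(\mathcal{T}u)_l=\tau u_l$ and the relation $\tau^2=c_1\tau-c_2$, with (3) obtained by contradiction from (1). Your version is just slightly more explicit about the coordinate bookkeeping $(y,0)$ in the $\tau$-basis, which the paper leaves implicit.
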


\begin{proof}
(1) By the definition of $B_\tau$ we have 
\[
B_\tau(u,\mathcal{T}u) =pr_\tau(u_l\cdot (\mathcal{T}u)_l) = pr_\tau(u_l\cdot \tau u_l) = pr_\tau(\tau(u_l\cdot u_l)).
\]
Since $(u_l\cdot u_l)\in k$, the latter equals $0$ by the definition of $pr_\tau$.

(3) We have
\begin{align*}
B_\tau(\mathcal{T}u,\mathcal{T}u) &=pr_\tau((\mathcal{T}u)_l\cdot (\mathcal{T}u)_l) =pr_\tau(\tau u_l\cdot \tau u_l) \\
&=pr_\tau(\tau^2(u_l\cdot u_l)) =pr_\tau((c_1\tau-c_2)(u_l\cdot u_l))\\
&\stackrel{(*)}=-c_2 pr_\tau (u_l\cdot u_l)=-c_2 B_\tau(u,u).
\end{align*}
Again in ($*$) we used the definition of $pr_\tau$ and the fact that $(u_l\cdot u_l)\in k$.

(3) follows from (1) and (2). 
\end{proof}

\begin{dfn}
A nonzero element $u\in U$ such that $(u_l\cdot u_l)\in k$ will be called $\tau$-rational.
\end{dfn}

Observe that in the non-split case if $u$ is $\tau$-rational , then $\mathcal{T}u$ is not $\tau$-rational.

\begin{rem}
Consider a canonical involution $\rho$ of $l$ which switches the roots $\tau$ and $\sigma$.
It can be extended to an involution on $U$ by
\[
\rho(\ldots,x_i,\ldots)=(\ldots,\rho(x_i),\ldots), \quad x_i\in l.
\]
Observe that in the split case, it maps \[(\ldots, (y,y')_i,\ldots)\mapsto (\ldots, (y,-y')_i,\ldots ).\]
Note also that $\rho$ has eigenvalues $\pm 1$ over $k$ and, hence, it induces an eigenspace decomposition over $k$
\[
(\ldots,(y,y')_i,\ldots)=(\ldots,(y,0)_i,\ldots)+(\ldots,(0,y')_i,\ldots).
\]

All the results of the present section respect $\rho$, meaning that any statement/proof for $\tau$ holds for $\sigma$ automatically
by applying the involution $\rho$, e.g., we have \[B_\tau(u,\hat u)=B_\sigma(\rho(u),\rho(\hat u))\quad \text{ for }u,\hat u\in U.\]
\end{rem}

\begin{rem}\label{rem:splitproj}
Consider the $k$-linear projection $p\colon l\to k$ given by $\tau:=1$. By definition we have $p(y,y')=y+y'$, $p[y,y']=y$ and
the induced map $p\colon U_l\to U$ is given by $(\ldots,x_i,\ldots)\mapsto (\ldots,p(x_i),\ldots)$, $x_i\in l$.

In the split case, $p$ projects the eigenspace decomposition $U_l=U_\tau\oplus U_\sigma$ of the switch involution $\mathcal{T}$ onto the eigenspace decomposition over $k$
\begin{equation}\label{eq:spliteigen}U=p(U_\tau)\oplus p(U_\sigma)\end{equation}
\[(y,y')=\tfrac{1}{2}(y+y', y'+y)+\tfrac{1}{2}(y-y',y'-y),\]
where $p(U_\tau)=U_1$ is the submodule of $\mathcal{T}$-invariants and $p(U_\sigma)=U_{-1}$ is the submodule of $\mathcal{T}$-skewinvariants.

Whenever we discuss the eigenspace decomposition in the split case we will always mean the decomposition~\eqref{eq:spliteigen} over $k$
obtained by applying the projection $p$.
\end{rem}

\section{Folded representations}

Following \cite[Exp.~XXI, \S1.1]{SGA} we define a  {\em root datum} to be an embedding
$\Phi\hookrightarrow \Lambda^\vee$, $\alpha\mapsto \alpha^\vee$, 
of a non-empty finite subset $\Phi$ of a free finitely generated abelian group $\Lambda$ into its $\Z$-linear dual $\Lambda^\vee$
such that 
\begin{itemize}
\item
$\Phi \cap 2\Phi=\emptyset$, 
$\alpha^\vee(\alpha)=2$ for all $\alpha \in \Phi$, and
\item
$\beta - \alpha^\vee(\beta)\alpha \in \Phi$ and 
$\beta^\vee - \beta^\vee(\alpha)\alpha^\vee \in \Phi^\vee$ 
for all $\alpha,\beta \in \Phi$, where $\Phi^\vee$ denotes the image of
$\Phi$ in $\Lambda^\vee$.
\end{itemize}
The elements of $\Phi$ (resp. $\Phi^\vee$) are called roots (resp. coroots). 
The sublattice of $\Lambda$ generated by $\Phi$ 
is called the {\em root lattice} and is denoted by $\Lambda_r$. The root lattice $\Lambda_r$ admits a basis
$\Delta=\{\alpha_i\}$ 
such that 
each $\alpha \in \Phi$ is a linear combination of $\alpha_i$'s with 
either all positive or all negative coefficients. 
So the set $\Phi$ splits into two disjoint subsets 
$\Phi = \Phi^+ \amalg \Phi^-$, where
$\Phi^+$ (resp. $\Phi^-$) is called the set of positive (resp. negative)
roots.  The roots $\alpha_i \in \Delta$ are called {\em simple roots}.

Consider a  root datum $\Phi\hookrightarrow \Lambda^\vee$ where we fix a subset of simple roots~$\Delta$. 
Let $U$ be a free module of rank $2n$ ($2n\ge |\Delta|$) over $k$ together with a symmetric positive-definite bilinear form $B(-,-)$ over $k\subset \R$; 
and
let $\Phi$ be a subset in $U$ 
so that \[\alpha^\vee(u)=2B(\alpha,u)/B(\alpha,\alpha),\quad \alpha\in \Phi,\; u\in U.\]
Let $W$ denote the respective Weyl group generated by reflections \[r_\alpha(u):=u-\alpha^\vee(u) \alpha,\quad u\in U,\;\alpha\in \Delta.\]
We will refer to $U$ as a representation of the root datum.

\begin{dfn}\label{dfn:foldedreps} We say that the representation $U$ is a folded representation of the root datum $\Phi\hookrightarrow\Lambda^\vee$
if 
 \begin{itemize}
 \item[(1)] There exists a quadratic separable algebra $l/k$ and a free module $\mathcal{U}$ of rank $n$ over $l$ such that 
 $U$ is the Weil restriction of $\mathcal{U}$ with respect to the ring extension $l/k$. 
 
 \noindent (Following the previous section we then fix roots $\tau, \sigma \in l$, the $\tau$-operator $\mathcal{T}\colon U\to U$
 and the $\tau$-form $B_\tau$.)
  \item[(2)] The standard bilinear form $B(-,-)$ coincides with the respective $\tau$-form $B_\tau(\cdot,\cdot)$ over $k$.
 \item[(3)] The $\tau$-operator $\mathcal{T}$   
 stabilizes the root lattice $\Lambda_r\subset U$, i.e., $\mathcal{T}$ is a $\Z$-linear automorphism of $\Lambda_r$.
 \item[(4)] The set of simple roots $\Delta$ of $\Phi$ partitions as a disjoint union
 \[
 \Delta=\Delta^{\mathcal{T}}\amalg \Delta_{rat}\amalg \mathcal{T}(\Delta_{rat}),
 \]
 where $\Delta^{\mathcal{T}}$ is the subset of all $\tau$-invariant simple roots (i.e.,  
 $\alpha=\mathcal{T}\alpha$) and $\Delta_{rat}$ is a subset consisting of $\tau$-rational simple roots.
\end{itemize}
\end{dfn}

\begin{rem}\label{rem:foldreps}
Since $B_\tau$ is a standard bilinear form, by Lemma~\ref{lem:dotpr} $c_2=\tau\sigma=-1$. We will always assume $\tau>0$. Lemma~\ref{lem:normrat} guaranties
that for all $\alpha\in \Delta_{rat}$ the roots $\alpha$ and $\mathcal{T}\alpha$ are orthogonal and have the same length. 
The condition (3) and 
the assumption that $\Delta_{rat}$ and $\mathcal{T}(\Delta_{rat})$ are disjoint imply that $|\Delta_{rat}|=|\mathcal{T}(\Delta_{rat})|$.
\end{rem}

\begin{rem}
We will see later that in a non-split case we always have $\Delta^{\mathcal{T}}=\emptyset$ and all roots in $\mathcal{T}(\Delta_{rat})$ are non rational. But in a split case both subsets $\Delta_{rat}$
and $\mathcal{T}(\Delta_{rat})$ consist of rational roots.
\end{rem}

From now on we assume that $U$ is a folded representation of a root datum $\Phi\hookrightarrow \Lambda^\vee$.

\underline{Given a $\tau$-rational simple root} $\alpha$ consider the $k$-linear reflections $r_\alpha$ and $r_{\mathcal{T}\alpha}$.
For simplicity we set $c=B_\tau(\alpha,\alpha)=B_\tau(\mathcal{T}\alpha,\mathcal{T}\alpha)$ 
($\alpha$ and $\mathcal{T}\alpha$ have the same length).
Observe that they commute with each other since $\mathcal{T}\alpha$ is orthogonal to $\alpha$.

We then have
 \begin{align*}
 r_{\mathcal{T}\alpha}r_\alpha u &=r_{\mathcal{T}\alpha}(u- \alpha^\vee(u) \alpha) \\
&= u- \alpha^\vee(u) \alpha - (\mathcal{T}\alpha)^\vee\big( u-\alpha^\vee(u) \alpha \big) \mathcal{T}\alpha \\
&= u-\alpha^\vee(u)\alpha - (\mathcal{T}\alpha)^\vee(u)\mathcal{T}\alpha.
 \end{align*}

\begin{lem}\label{lem:commuting} The $k$-linear operator $r_{\mathcal{T} \alpha}r_\alpha$ 
commutes with $\mathcal{T}$.
\end{lem}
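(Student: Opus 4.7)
The plan is to exploit the reflection decomposition: on the $k$-plane $V \subset U$ spanned by $\alpha$ and $\mathcal{T}\alpha$ and on its $B_\tau$-orthogonal complement $H$, the commuting composition $r_{\mathcal{T}\alpha}r_\alpha$ acts as two different scalars, so any $k$-linear operator that stabilizes this decomposition automatically commutes with it. I would therefore reduce the lemma to checking that $\mathcal{T}$ preserves both $V$ and $H$.

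First, since $B_\tau$ is positive definite on $U$ (being the standard form on the folded representation) and $\alpha, \mathcal{T}\alpha$ are $B_\tau$-orthogonal nonzero vectors by Lemma~\ref{lem:normrat}(1), they are linearly independent and $U = V \oplus H$ as an orthogonal direct sum. Because $r_\alpha$ fixes $\mathcal{T}\alpha$ and negates $\alpha$ (and dually for $r_{\mathcal{T}\alpha}$), the product $r_{\mathcal{T}\alpha}r_\alpha$ acts as $-\mathrm{id}$ on $V$; on $H$ it is the identity since each reflection fixes its own root hyperplane and $H$ lies in both.

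Next I would verify that $\mathcal{T}$ stabilizes each summand. Stability of $V$ is immediate: $\mathcal{T}\alpha \in V$ by construction, and $\mathcal{T}(\mathcal{T}\alpha) = \mathcal{T}^2\alpha = c_1 \mathcal{T}\alpha - c_2 \alpha \in V$ by the quadratic relation $p(\mathcal{T}) = 0$. For stability of $H$, take $u \in H$, so $B_\tau(\alpha,u) = B_\tau(\mathcal{T}\alpha, u) = 0$. Adjointness of $\mathcal{T}$ (Lemma~\ref{lem:adjoint}) gives $B_\tau(\alpha, \mathcal{T}u) = B_\tau(\mathcal{T}\alpha, u) = 0$, and applying adjointness twice together with the quadratic relation,
\[
B_\tau(\mathcal{T}\alpha, \mathcal{T}u) = B_\tau(\alpha, \mathcal{T}^2 u) = c_1 B_\tau(\alpha, \mathcal{T}u) - c_2 B_\tau(\alpha, u) = 0,
\]
hence $\mathcal{T}u \in H$.

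Combining these observations, $\mathcal{T}$ preserves a decomposition on which $r_{\mathcal{T}\alpha}r_\alpha$ is a scalar on each summand, so the two operators commute. The argument is essentially routine once the previous lemmas are in place; the only mildly delicate point is ensuring that $V$ is a genuine $2$-plane on which $B_\tau$ restricts non-degenerately, which is guaranteed by the orthogonality of $\alpha$ and $\mathcal{T}\alpha$ together with positive-definiteness of $B_\tau$.
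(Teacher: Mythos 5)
Your argument is correct in substance but follows a genuinely different route from the paper. The paper proves the lemma by a direct computation: it multiplies through by $c=B_\tau(\alpha,\alpha)$, expands $c\,\mathcal{T}(r_{\mathcal{T}\alpha}r_\alpha u)$, and uses the adjointness of $\mathcal{T}$ (Lemma~\ref{lem:adjoint}) together with the relation $\mathcal{T}^2=c_1\mathcal{T}+\mathrm{id}$ to shuffle the two correction terms into each other, landing on $c\,r_{\mathcal{T}\alpha}r_\alpha(\mathcal{T}u)$ and cancelling $c$ because $k$ is a domain. You instead observe that $r_{\mathcal{T}\alpha}r_\alpha$ is $-\mathrm{id}$ on the plane $V=\langle\alpha,\mathcal{T}\alpha\rangle$ and $\mathrm{id}$ on its orthogonal complement, and reduce everything to showing that $\mathcal{T}$ preserves this decomposition --- which you verify from exactly the same two inputs (adjointness and the quadratic relation), plus the orthogonality of $\alpha$ and $\mathcal{T}\alpha$ from Lemma~\ref{lem:normrat}. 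Your version is more conceptual and explains \emph{why} the operators commute (they are simultaneously block-diagonalized), at the cost of invoking the splitting $U=V\oplus H$; the paper's version is a brute-force identity that works verbatim over the base ring.

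The one point you should patch is precisely that splitting. Over the ring $k$ (e.g.\ $k=\Z[\tfrac15]$) the orthogonal complement of a nondegenerate submodule need not be a direct complement: already for $U=\Z^2$ with the standard form and $V$ spanned by $(1,1)$ and $(1,-1)$ one has $V\oplus V^{\perp}=V\subsetneq U$. So ``$U=V\oplus H$'' is not automatic from positive-definiteness when $k$ is not a field. The repair is one sentence: since $U$ is free over the domain $k\subset\R$, the map $U\to U\otimes_k\mathrm{Frac}(k)$ is injective and two $k$-linear endomorphisms of $U$ commute if and only if their extensions commute; over the field $\mathrm{Frac}(k)$ the orthogonal decomposition does hold and your argument goes through. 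With that sentence added, the proof is complete.
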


\begin{proof}  For any $u\in U$ we have
\begin{align*}
c \mathcal{T}(r_{\mathcal{T} \alpha}r_\alpha u) &= \mathcal{T}(cu-2B_\tau(u,\alpha)\alpha - 2B_\tau(u,\mathcal{T}\alpha)\mathcal{T}\alpha)\\
&=\mathcal{T}(cu) - 2B_\tau(u, \alpha) \mathcal{T} \alpha - 2B_\tau (u, \mathcal{T} \alpha) \mathcal{T}^2 \alpha \\
&=\mathcal{T}(cu) - 2B_\tau(u, \alpha) \mathcal{T} \alpha - 2B_\tau (u, \mathcal{T} \alpha) (c_1\mathcal{T}+id) \alpha\\
& =\mathcal{T}(cu) - 2B_\tau(u,\mathcal{T}\alpha) \alpha - 2B_\tau (u, c_1\mathcal{T}\alpha+\alpha)\mathcal{T}\alpha \\
& =\mathcal{T}(cu) - 2B_\tau(u,\mathcal{T}\alpha) \alpha - 2B_\tau (u, \mathcal{T}^2\alpha)\mathcal{T}\alpha \\
& =\mathcal{T}(cu) - 2B_\tau(\mathcal{T} u,\alpha) \alpha - 2B_\tau (\mathcal{T} u, \mathcal{T}\alpha)\mathcal{T}\alpha \\ &= cr_{\mathcal{T} \alpha}r_\alpha (\mathcal{T} u),
\end{align*}
(here we used the fact that $\mathcal{T}$ is adjoint (see Lemma~\ref{lem:adjoint}) and the relation $\mathcal{T}^2=c_1\mathcal{T}+id$).
The result then follows since $k$ is a domain.
\end{proof}

Let $R\colon U_l\to U_l$ denote the $k$-linear operator $r_{\mathcal{T}\alpha}r_\alpha$ extended by linearity over $l$.
\begin{cor} 
The operator $R$ preserves the eigenspaces of $\mathcal{T}_l$, i.e., $R(U_\tau)=U_\tau$ and $R(U_\sigma)=U_\sigma$.
\end{cor}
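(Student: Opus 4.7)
The plan is to deduce this corollary directly from Lemma~\ref{lem:commuting} together with the standard fact that invertible operators commuting with a diagonalizable operator preserve each of its eigenspaces. The proof should be short and essentially formal.

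First, I would extend Lemma~\ref{lem:commuting} from $U$ to $U_l$ by $l$-linearity. Since both $R = r_{\mathcal{T}\alpha} r_\alpha$ and $\mathcal{T}$ are $k$-linear endomorphisms of $U$ and we defined $\mathcal{T}_l$ and the extension of $R$ by $l$-linear extension, the relation $R\mathcal{T} = \mathcal{T} R$ on $U$ immediately gives $R\mathcal{T}_l = \mathcal{T}_l R$ on $U_l$.

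Next, for any $v \in U_\tau$, i.e., $\mathcal{T}_l v = \tau v$, the commutation yields
\[
\mathcal{T}_l(Rv) = R(\mathcal{T}_l v) = R(\tau v) = \tau Rv,
\]
so $Rv \in U_\tau$, giving the inclusion $R(U_\tau) \subseteq U_\tau$. The same computation with $\tau$ replaced by $\sigma$ gives $R(U_\sigma) \subseteq U_\sigma$.

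Finally, for equality (rather than mere inclusion), observe that $R$ is invertible: its inverse is $R^{-1} = r_\alpha r_{\mathcal{T}\alpha}$, which by the symmetric role of $\alpha$ and $\mathcal{T}\alpha$ in Lemma~\ref{lem:commuting} (or by taking inverses of the identity $R\mathcal{T}_l = \mathcal{T}_l R$) also commutes with $\mathcal{T}_l$. Applying the previous paragraph to $R^{-1}$ gives $R^{-1}(U_\tau) \subseteq U_\tau$, hence $U_\tau \subseteq R(U_\tau)$, and similarly for $U_\sigma$. There is no serious obstacle here; the only thing to be careful about is passing cleanly from commutation over $k$ to commutation over $l$, and verifying that $R$ is invertible so that inclusion upgrades to equality.
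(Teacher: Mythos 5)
Your proof is correct and follows the same (implicit) route as the paper: the corollary is stated without proof precisely because it is the standard consequence of Lemma~\ref{lem:commuting} that an invertible operator commuting with $\mathcal{T}_l$ preserves its eigenspaces, after extending the commutation relation $l$-linearly to $U_l$. Your care in upgrading the inclusion $R(U_\tau)\subseteq U_\tau$ to an equality via the invertibility of $R$ (indeed $R^{-1}=r_\alpha r_{\mathcal{T}\alpha}$, and in fact $R^2=\mathrm{id}$ since $\alpha$ and $\mathcal{T}\alpha$ are orthogonal) is exactly the right finishing touch.
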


Consider the orthogonal projection
 \[
 \pi_\tau \colon U \to U_l \to U_\tau,\quad
 \pi_\tau (u)= \tfrac{1}{\tau-\sigma}(\mathcal{T}-\sigma I)(u).
 \]
 Set $ \bar{\alpha}=\pi_\tau(\alpha)=\tfrac{1}{\tau-\sigma}(\mathcal{T}\alpha-\sigma \alpha)\in U_\tau$
 to be the image of $\alpha$.

 \begin{lem}  The $l$-linear operator $R$ satisfies the following
 \begin{itemize}
 \item[(1)] $R(\bar\alpha)=-\bar\alpha$,
 \item[(2)] $R(u)=u$ for all $u\in U_\tau$ such that $B_\tau(u,\bar\alpha)=0$.
 \end{itemize}
 \end{lem}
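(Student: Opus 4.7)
The plan is to compute both parts using the explicit formula
\[
R(u) = u - \alpha^\vee(u)\alpha - (\mathcal{T}\alpha)^\vee(u)\mathcal{T}\alpha
\]
established just before the lemma, together with the orthogonality relations $B_\tau(\alpha,\mathcal{T}\alpha)=0$ and $B_\tau(\alpha,\alpha)=B_\tau(\mathcal{T}\alpha,\mathcal{T}\alpha)$ provided by Remark~\ref{rem:foldreps} (which relies on Lemma~\ref{lem:normrat} applied to the $\tau$-rational root $\alpha$).

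For part (1), I would first evaluate $R$ on $\alpha$ and on $\mathcal{T}\alpha$ separately. Since $B_\tau(\alpha,\mathcal{T}\alpha)=0$, we have $(\mathcal{T}\alpha)^\vee(\alpha)=0$ and $\alpha^\vee(\mathcal{T}\alpha)=0$, so the formula gives $R(\alpha)=-\alpha$ and $R(\mathcal{T}\alpha)=-\mathcal{T}\alpha$. Since $R$ is $k$-linear (and extends $l$-linearly to $U_l$), applying it to the definition $\bar\alpha=\tfrac{1}{\tau-\sigma}(\mathcal{T}\alpha-\sigma\alpha)$ yields $R(\bar\alpha)=\tfrac{1}{\tau-\sigma}(-\mathcal{T}\alpha+\sigma\alpha)=-\bar\alpha$.

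For part (2), the key observation is that the condition $B_\tau(u,\bar\alpha)=0$ for $u\in U_\tau$ already forces $\alpha^\vee(u)=(\mathcal{T}\alpha)^\vee(u)=0$. To see this, decompose $\alpha=\bar\alpha+\alpha_\sigma$ with $\alpha_\sigma\in U_\sigma$ according to the eigenspace decomposition of Proposition~\ref{ex:coreig}. Because $U_\tau\perp U_\sigma$ with respect to the $l$-linear extension of $B_\tau$, for any $u\in U_\tau$ we get $B_\tau(u,\alpha)=B_\tau(u,\bar\alpha)$. Next, since $\alpha_\sigma\in U_\sigma$ is a $\sigma$-eigenvector of $\mathcal{T}_l$ and $\bar\alpha\in U_\tau$ is a $\tau$-eigenvector, $\mathcal{T}\alpha=\tau\bar\alpha+\sigma\alpha_\sigma$, so again using orthogonality, $B_\tau(u,\mathcal{T}\alpha)=\tau B_\tau(u,\bar\alpha)$. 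Both quantities vanish by hypothesis, and the explicit formula for $R(u)$ collapses to $R(u)=u$.

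The main subtlety to get right is the interplay between the $k$-linear setup (in which $R$ and the reflections were originally defined) and the $l$-linear extension on $U_l$, so that the orthogonality statement $U_\tau\perp U_\sigma$ with respect to the $l$-bilinear extension of $B_\tau$ is legitimately applied to $\alpha\in U\subset U_l$ decomposed over $l$. Once this is clearly in place, both claims follow by direct substitution; no further computation is required.
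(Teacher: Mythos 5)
Your proof is correct. Part (1) coincides with the paper's argument: both evaluate $R$ on $\alpha$ and $\mathcal{T}\alpha$ using the orthogonality and equal length of these two roots, then conclude by linearity. In part (2) you take a slightly different, and arguably cleaner, route. You decompose $\alpha=\bar\alpha+\alpha_\sigma$ and use the orthogonality of $U_\tau$ and $U_\sigma$ with respect to the $l$-extended form to conclude that \emph{both} pairings $B_\tau(u,\alpha)$ and $B_\tau(u,\mathcal{T}\alpha)$ vanish, so the reflection formula collapses termwise. The paper instead only extracts the relation $B_\tau(u,\mathcal{T}\alpha)=\sigma B_\tau(u,\alpha)$ from the hypothesis, rewrites $cR(u)-cu=-2B_\tau(u,\alpha)(\alpha+\sigma\mathcal{T}\alpha)$, and then argues this must vanish because $\alpha+\sigma\mathcal{T}\alpha$ lies in $U_\sigma$ (it is a multiple of $(\mathcal{T}-\tau I_l)(\alpha)$, using $\tau\sigma=-1$) while $R$ stabilizes $U_\tau$, so the difference lies in $U_\tau\cap U_\sigma=\{0\}$. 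Both arguments ultimately rest on the same orthogonal eigenspace decomposition; yours avoids the membership argument for the auxiliary vector and gives the stronger intermediate fact that $u$ is orthogonal to both $\alpha$ and $\mathcal{T}\alpha$, while the paper's version needs only the single linear relation between the two pairings. Your flagged subtlety (that the orthogonality of $U_\tau$ and $U_\sigma$ is a statement about the $l$-bilinear extension of $B_\tau$, applied to $\alpha\in U\subset U_l$) is exactly the right point to be careful about, and it is covered by the paper's orthogonality lemma.
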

 
\begin{proof}
(1) By linearity we have
$R (\bar\alpha) =\tfrac{1}{\tau-\sigma}  R(\mathcal{T}\alpha-\sigma \alpha) =\tfrac{1}{\tau-\sigma}\left(R(\mathcal{T}\alpha)-\sigma R(\alpha)\right)$. Since 
$r_{\mathcal{T}\alpha}(\alpha)=\alpha$ and $r_{\alpha}(\mathcal{T}\alpha)=\mathcal{T}\alpha$ (here we used orthogonality), 
we get \[R(\mathcal{T}\alpha)=r_{\mathcal{T}\alpha}(\mathcal{T}\alpha)=-\mathcal{T}\alpha \;\text{ and }\;
R(\alpha)=r_{\mathcal{T}\alpha}(-\alpha)=-\alpha.\]  So we obtain 
$R (\bar\alpha)=\tfrac{1}{\tau-\sigma}(-\mathcal{T}\alpha+\sigma \alpha)=-\bar\alpha$.

(2) Suppose $B_\tau(u,\bar\alpha)=0$, that is $B_\tau(u,\mathcal{T}\alpha-\sigma \alpha)=0$ $\Longleftrightarrow$ $B_\tau(u,\mathcal{T}\alpha)=\sigma B_\tau(u,\alpha)$.
We then obtain
\begin{align*}
cR(u)&=cu-2B_\tau (u, \alpha) \alpha - 2B_\tau(u, \mathcal{T} \alpha) \mathcal{T} \alpha\\
&=cu-2B_\tau(u, \alpha) \alpha - 2\sigma B_\tau(u, \alpha) \mathcal{T} \alpha\\
&=cu-2B_\tau(u,\alpha)(\alpha+\sigma \mathcal{T}\alpha).
\end{align*}
To conclude, we notice that $\alpha+\sigma \mathcal{T}\alpha=0$, since $\tau(\alpha+\sigma\mathcal{T}\alpha)=\tau \alpha-\mathcal{T}\alpha\in U_\sigma$ (cf. Proposition~\eqref{ex:coreig}).
But $R$ stabilizes $U_\tau$ and, hence, $\alpha+\sigma \mathcal{T}\alpha\in U_\sigma\cap U_\tau=\{0\}$.
\end{proof}

\begin{cor} The operator $R$ is an $l$-linear reflection if restricted to $U_\tau$ with respect to $\bar\alpha$ and the $l$-form $B_\tau$. 
So we will denote it by $R_{\bar \alpha}$.
\end{cor}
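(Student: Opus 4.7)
The plan is to combine the two assertions of the preceding lemma with the explicit formula for $B_\tau \circ \pi_\tau$ derived at the end of the previous section, in order to exhibit $R|_{U_\tau}$ as the reflection
\[
R_{\bar\alpha}(u)=u-\tfrac{2B_\tau(u,\bar\alpha)}{B_\tau(\bar\alpha,\bar\alpha)}\bar\alpha,\qquad u\in U_\tau.
\]
The route is to produce the orthogonal decomposition $U_\tau=l\bar\alpha\oplus(\bar\alpha^{\perp}\cap U_\tau)$ and then identify the $-1$- and $+1$-eigenspaces of $R|_{U_\tau}$ with the two summands via parts (1) and (2) of the previous lemma.

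The first and essentially only nontrivial step is to check that $\bar\alpha$ is anisotropic with respect to the $l$-linear extension of $B_\tau$. By Remark~\ref{rem:foldreps} we are in the folded setting with $c_2=\tau\sigma=-1$, so specializing the identity $B_\tau(\pi_\tau(u),\pi_\tau(\hat u))=\tfrac{c_1\sigma-2c_2}{D}(u_l\cdot \hat u_l)$ to $u=\hat u=\alpha$ and using $\tau$-rationality of $\alpha$ to rewrite $(\alpha_l\cdot \alpha_l)=B_\tau(\alpha,\alpha)>0$, one obtains
\[
B_\tau(\bar\alpha,\bar\alpha)=\tfrac{c_1\sigma+2}{D}B_\tau(\alpha,\alpha)=\tfrac{\sigma^2+1}{D}B_\tau(\alpha,\alpha),
\]
where the last equality uses $c_1\sigma=(\tau+\sigma)\sigma=\sigma^2-1$. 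This is clearly nonzero: in the non-split case $\sigma\in\R$ and hence $\sigma^2+1>0$, while in the split case $\sigma^2=1$ gives $\sigma^2+1=2$. In either situation the scalar is also a unit in $l$ under the standing assumptions that $D$ and $B_\tau(\alpha,\alpha)$ are invertible.

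With anisotropy in place, every $u\in U_\tau$ admits a unique decomposition $u=\lambda\bar\alpha+v$ with $\lambda=B_\tau(u,\bar\alpha)/B_\tau(\bar\alpha,\bar\alpha)\in l$ and $v\in \bar\alpha^\perp\cap U_\tau$. Since $R$ preserves $U_\tau$ and is $l$-linear, the preceding lemma gives $R(\lambda\bar\alpha)=-\lambda\bar\alpha$ by part (1) and $R(v)=v$ by part (2). Therefore $R(u)=-\lambda\bar\alpha+v=u-2\lambda\bar\alpha$, which is precisely the displayed reflection formula. Hence $R|_{U_\tau}=R_{\bar\alpha}$, as asserted. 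The main obstacle in the plan is really just the non-vanishing of $B_\tau(\bar\alpha,\bar\alpha)$; the formula from the previous section reduces this to the trivial inequality $\sigma^2+1\neq 0$, after which the conclusion is routine linear algebra.
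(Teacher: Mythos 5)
Your proof is correct and is essentially the paper's (implicit) argument: the corollary is stated there without proof as an immediate consequence of parts (1) and (2) of the preceding lemma, and your anisotropy check $B_\tau(\bar\alpha,\bar\alpha)=\tfrac{\sigma^2+1}{D}B_\tau(\alpha,\alpha)\neq 0$ is exactly what the formula for $B_\tau(\pi_\tau(u),\pi_\tau(\hat u))$ derived at the end of Section~2 supplies. One minor caveat: invertibility of $B_\tau(\alpha,\alpha)$ in $l$ is not among the paper's standing assumptions (in Example~\ref{ex:EH} one has $B_\tau(a_i,a_i)=4$, which is not a unit in $\Z(\tau)[\tfrac{1}{5}]$), so your decomposition $u=\lambda\bar\alpha+v$ with $\lambda\in l$ should be read over the fraction field of $l$; this does not affect the conclusion, since anisotropy of $\bar\alpha$ together with the lemma already identifies $R|_{U_\tau}$ with the reflection in $\bar\alpha$.
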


\underline{Given a $\tau$-invariant simple root} $\alpha=(\ldots,(y,y')_i,\ldots)$, observe that
\[\mathcal{T}\alpha=(\ldots,(y',y+c_1y')_i,\ldots)=(\ldots,(y,y')_i,\ldots)=\alpha\] implies $c_1=0$
and $y=y'$. So we are in the split case and $\alpha=\bar\alpha\in U_\tau$. As in the rational case we set $c=B_\tau(\alpha,\alpha)$.

Now for any $u\in U_\tau$ we have $r_\alpha(u)=u-\alpha^\vee(u)\alpha \in U_\tau$.
We set $R_\alpha$ to be the restriction of the reflection $r_\alpha$ to $U_\tau$. By definition, $R_\alpha$
is a $k$-linear reflection on $U_\tau$ with respect to~$\alpha$.

\begin{lem}\label{lem:invcomm}
The operator $R_\alpha$ commutes with $\mathcal{T}$.
\end{lem}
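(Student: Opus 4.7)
The plan is to imitate the proof of Lemma~\ref{lem:commuting}, which will actually be much simpler here because $\mathcal{T}\alpha=\alpha$ kills all the cross-terms that appeared there. As a preliminary observation, note that for $u\in U_\tau$ the element $R_\alpha(u)=u-\alpha^\vee(u)\alpha$ indeed lies in $U_\tau$: both $u$ and $\alpha$ lie in $U_\tau$, the form $B_\tau$ extends $l$-linearly to $U_l$, so $\alpha^\vee(u)\in l$ and $U_\tau$ (being an $l$-submodule of $U_l$) is stable under $R_\alpha$. Thus the statement $R_\alpha\circ\mathcal{T}=\mathcal{T}\circ R_\alpha$ on $U_\tau$ makes sense.

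The computation itself is a direct two-line manipulation. Setting $c=B_\tau(\alpha,\alpha)$ and starting with an arbitrary $u\in U_\tau$, I would write
\[
c\,\mathcal{T}(R_\alpha u)=\mathcal{T}\bigl(cu-2B_\tau(u,\alpha)\alpha\bigr)=c\,\mathcal{T}u-2B_\tau(u,\alpha)\,\mathcal{T}\alpha=c\,\mathcal{T}u-2B_\tau(u,\alpha)\alpha,
\]
where the last equality is just the defining hypothesis $\mathcal{T}\alpha=\alpha$ of a $\tau$-invariant simple root. On the other side, using the adjointness of $\mathcal{T}$ with respect to $B_\tau$ (Lemma~\ref{lem:adjoint}) together again with $\mathcal{T}\alpha=\alpha$,
\[
c\,R_\alpha(\mathcal{T}u)=c\,\mathcal{T}u-2B_\tau(\mathcal{T}u,\alpha)\alpha=c\,\mathcal{T}u-2B_\tau(u,\mathcal{T}\alpha)\alpha=c\,\mathcal{T}u-2B_\tau(u,\alpha)\alpha.
\]
Since $k$ is an integral domain and $c$ is a unit (up to cancellation, or simply because we can divide the equation through), this gives $\mathcal{T}(R_\alpha u)=R_\alpha(\mathcal{T}u)$.

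There is essentially no obstacle: the only subtle points are the adjointness of $\mathcal{T}$, which has already been recorded in Lemma~\ref{lem:adjoint}, and the fact that $\alpha$ is truly fixed by $\mathcal{T}$ (not merely up to a scalar), which is built into the definition of $\tau$-invariant used just before the lemma. In effect, the proof of Lemma~\ref{lem:commuting} collapses to a trivial case once one sets $\mathcal{T}\alpha=\alpha$ and drops the auxiliary reflection $r_{\mathcal{T}\alpha}$.
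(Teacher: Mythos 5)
Your proof is correct and takes essentially the same route as the paper's: both reduce the claim to the identity $B_\tau(u,\alpha)=B_\tau(\mathcal{T}u,\alpha)$, obtained from the adjointness of $\mathcal{T}$ (Lemma~\ref{lem:adjoint}) together with $\mathcal{T}\alpha=\alpha$ (the paper phrases this via $\mathcal{T}^2=\mathrm{id}$, which is equivalent in this split situation), and then cancels $c=B_\tau(\alpha,\alpha)$ using that $k$ is a domain.
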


\begin{proof}
Since, $\mathcal{T}^2=id$ we get
\begin{align*}
c\mathcal{T}(R_\alpha(u)) &=\mathcal{T}(cu)-2B_\tau(u,\alpha)\mathcal{T}\alpha\\
&= \mathcal{T}(cu)-2B_\tau(\mathcal{T}u,\mathcal{T}\alpha)\mathcal{T}\alpha\\
&=\mathcal{T}(cu)-2B_\tau(\mathcal{T}u,\alpha)\alpha=cR_\alpha(\mathcal{T}(u)).\qedhere
\end{align*}
\end{proof}

\begin{prop} 
For all simple roots in $\Delta_{rat}\amalg \Delta^{\mathcal{T}}$ 
the corresponding reflections
$R_{\bar\alpha}$
stabilize the subset $\pi_\tau (\Phi)$ in $U_\tau$.
\end{prop}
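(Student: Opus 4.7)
The plan is to handle the two types of simple roots separately, in both cases reducing to the fact that the genuine Weyl group reflections preserve $\Phi$ and that $\pi_\tau$ is built from $\mathcal{T}$ and the identity, so it commutes with anything that commutes with $\mathcal{T}$.

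First I would treat the case $\alpha\in\Delta_{rat}$. Take an arbitrary $\beta\in\Phi$. Because $r_\alpha$ and $r_{\mathcal{T}\alpha}$ lie in the Weyl group $W$ (both $\alpha$ and $\mathcal{T}\alpha$ are simple roots of $\Phi$), their product $R=r_{\mathcal{T}\alpha}r_\alpha$ preserves $\Phi$, so $R(\beta)\in\Phi$ and thus $\pi_\tau(R(\beta))\in\pi_\tau(\Phi)$. It remains to identify this element with $R_{\bar\alpha}(\pi_\tau(\beta))$. Using the explicit formula $\pi_\tau=\tfrac{1}{\tau-\sigma}(\mathcal{T}-\sigma I)$ from Proposition~\ref{ex:coreig} and the commutation $R\mathcal{T}=\mathcal{T}R$ from Lemma~\ref{lem:commuting}, I would compute
\[
R(\pi_\tau(\beta))=\tfrac{1}{\tau-\sigma}R(\mathcal{T}-\sigma I)(\beta)=\tfrac{1}{\tau-\sigma}(\mathcal{T}-\sigma I)R(\beta)=\pi_\tau(R(\beta)).
\]
Since $\pi_\tau(\beta)\in U_\tau$ and $R$ restricted to $U_\tau$ is by definition $R_{\bar\alpha}$, this gives $R_{\bar\alpha}(\pi_\tau(\beta))=\pi_\tau(R(\beta))\in\pi_\tau(\Phi)$, as desired.

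Next I would treat the case $\alpha\in\Delta^{\mathcal{T}}$. By the discussion preceding Lemma~\ref{lem:invcomm}, we are in the split case, $\alpha=\bar\alpha\in U_\tau$, and $R_\alpha=R_{\bar\alpha}$ is the restriction of the ordinary reflection $r_\alpha$ to $U_\tau$. For any $\beta\in\Phi$, Lemma~\ref{lem:invcomm} gives $r_\alpha\mathcal{T}=\mathcal{T}r_\alpha$, so exactly as above
\[
R_{\bar\alpha}(\pi_\tau(\beta))=r_\alpha(\pi_\tau(\beta))=\pi_\tau(r_\alpha(\beta))\in\pi_\tau(\Phi),
\]
because $r_\alpha\in W$ preserves $\Phi$.

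I do not anticipate a real obstacle: the content is essentially the intertwining identity $\pi_\tau\circ R=R\circ\pi_\tau$ (respectively with $r_\alpha$), which is immediate from the commutation of $\mathcal{T}$ with these reflections already established in Lemmas~\ref{lem:commuting} and~\ref{lem:invcomm}. The only point requiring a little care is the well-definedness of $R_{\bar\alpha}$ as acting on $U_\tau$, but this is ensured by the Corollary stating that $R$ stabilizes the eigenspace $U_\tau$.
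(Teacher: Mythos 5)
Your proof is correct and follows the same route as the paper: the paper's (one-line) argument is exactly that $R$ commutes with $\mathcal{T}$, hence with $\pi_\tau=\tfrac{1}{\tau-\sigma}(\mathcal{T}-\sigma I_l)$, and since $R$ is a product of Weyl group reflections it preserves $\Phi$. You have simply written out the intertwining computation and the two cases explicitly.
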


\begin{proof}
It follows from the fact that operators $R$ commute with $\mathcal{T}$ and, hence, with $\pi_\tau=\tfrac{1}{\tau-\sigma}(\mathcal{T}-\sigma I_l)$.
\end{proof}

 \begin{rem} Following Remark~\ref{rem:splitproj}, in the split case (e.g. if there is a $\tau$-invariant root),
we replace $\pi_\tau$ by the composite $p\circ\pi_\tau\colon U\to U_1$ and, hence, we consider
all operators $R$ as $k$-linear reflections on the $\mathcal{T}$-invariant submodule~$U_1$.
\end{rem}

\section{Twisted foldings of root systems}

Suppose there is a folded representation $U$ of the root system $\Phi$ over $k$ together with the projection $\pi_\tau\colon U\to U_\tau$.
Consider the subset \[\Delta_\tau=\pi_\tau(\Delta_{rat}\amalg \Delta^{\mathcal{T}})\subset U_\tau.\]
Let $W_\tau$ be the group generated by the corresponding reflections and
let $\Phi_\tau=W_\tau(\Delta_\tau)\subset \pi_\tau(\Phi)$. 
Since $W_\tau$ stabilizes a finite subset $\Phi_\tau$ of vectors in $l$ and is generated by the corresponding reflections
(here we use the fact that $l\subset \R$, hence, $B_\tau$ is a positive definite form over $l$) it is a finite root system in $U_\tau$ over $l$. 

In the split case, we replace everywhere $\pi_\tau$ by $p\circ\pi_\tau$, $l$ by $k$, $U_\tau$ by $U_1$, $W_\tau$ by $W_1$, $\Phi_\tau$ by $\Phi_1$ and $\Delta_\tau$ by $\Delta_1$.
Observe that (in the non-split case) $\Phi_\tau$ is not necessarily crystallographic (see Example~\ref{ex:EH}). 

\begin{dfn}\label{dfn:taufolding}
We call the root system $\Phi_\tau$ a $\tau$-twisted folding of $\Phi$.
\end{dfn}

Since $\pi_\tau$ is an isomorphism and operators $R$ commute with $\pi_\tau$, 
$W_\tau$ can be identified with a subgroup of $W$ generated
by $r_{\mathcal{T}\alpha} r_\alpha$ and $r_\beta$, where $\alpha\in \Delta_{rat}$ and $\beta\in \Delta^{\mathcal{T}}$ respectively. Moreover, we have
\[
W_\tau\subseteq W_{\mathcal{T}}=\{w\in W \mid w\mathcal{T}=\mathcal{T}w\}.
\]

We claim that
\begin{lem}
The subset $\Delta_\tau$ is a set of simple roots of the root system $\Phi_\tau$.
\end{lem}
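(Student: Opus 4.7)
The plan is to verify the two defining properties of a base of simple roots for $\Phi_\tau$: that every element of $\Phi_\tau$ is a linear combination of $\Delta_\tau$ whose coefficients share a common sign, and that $\Delta_\tau$ is linearly independent over the appropriate coefficient ring ($k$ in the split case, $l$ in the non-split case).

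First I would compute $\pi_\tau$ (or $p\circ\pi_\tau$ in the split case) on each of the three parts of the partition $\Delta=\Delta^{\mathcal{T}}\amalg\Delta_{rat}\amalg\mathcal{T}(\Delta_{rat})$. By definition $\pi_\tau(\beta)=\bar\beta$ for $\beta\in\Delta_{rat}$; a $\tau$-invariant simple root $\alpha\in\Delta^{\mathcal{T}}$ forces $c_1=0$ and hence already lies in $U_\tau$, so $\pi_\tau(\alpha)=\alpha=:\bar\alpha$. Finally, a direct computation using $\mathcal{T}^2=c_1\mathcal{T}+I$ (recall $c_2=-1$) together with $c_1-\sigma=\tau$ yields
\[
\pi_\tau(\mathcal{T}\beta)=\tfrac{1}{\tau-\sigma}(\tau\mathcal{T}\beta+\beta)=\tau\,\bar\beta
\]
in the non-split case, which degenerates to $\bar\beta$ in the split case (where $\mathcal{T}^2=I$). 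The crucial point is that in either case $\pi_\tau(\mathcal{T}\beta)$ is a strictly positive real multiple of $\bar\beta\in\Delta_\tau$, thanks to $\tau>0$.

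Given any $\eta\in\Phi_\tau\subset\pi_\tau(\Phi)$, I would choose $\gamma\in\Phi$ with $\eta=\pi_\tau(\gamma)$; expanding $\gamma=\sum_{\alpha\in\Delta}n_\alpha\alpha$ in the simple basis of $\Phi$ and applying $\pi_\tau$ term-by-term gives
\[
\eta=\sum_{\alpha\in\Delta^{\mathcal{T}}}n_\alpha\,\bar\alpha+\sum_{\beta\in\Delta_{rat}}(n_\beta+\lambda\,n_{\mathcal{T}\beta})\,\bar\beta,
\]
where $\lambda=\tau>0$ in the non-split case and $\lambda=1$ in the split case; since all $n_\alpha$ share a common sign and $\lambda>0$, the same holds for the coefficients of $\eta$ with respect to $\Delta_\tau$.

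For the linear independence of $\Delta_\tau$, the split case is immediate: after substituting $\bar\beta=\tfrac{1}{2}(\beta+\mathcal{T}\beta)$, any vanishing relation on $\Delta_\tau$ becomes a vanishing $k$-relation among the $k$-independent set $\Delta^{\mathcal{T}}\amalg\Delta_{rat}\amalg\mathcal{T}(\Delta_{rat})\subset\Delta$, forcing all coefficients to vanish. In the non-split case $\Delta^{\mathcal{T}}=\emptyset$, and I would exploit the $l$-linear isomorphism $\pi_\tau\colon\mathcal{U}\stackrel{\simeq}{\to}U_\tau$ from Proposition~\ref{ex:coreig}: via the Weil-restriction dictionary, a subset $S\subset U$ is $l$-linearly independent in $\mathcal{U}$ precisely when $S\cup\mathcal{T}(S)$ is $k$-linearly independent in $U$, and this applied to $S=\Delta_{rat}$ delivers what we need. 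I expect the main technical hurdle to lie precisely in this last bookkeeping step between the $k$-structure on $U$ and the $l$-structure on $\mathcal{U}\cong U_\tau$; the disjointness assumption $\Delta_{rat}\cap\mathcal{T}(\Delta_{rat})=\emptyset$ from Remark~\ref{rem:foldreps} is what makes this transfer of independence from $k$ to $l$ go through cleanly.
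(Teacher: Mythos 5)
Your proof is correct and its core --- applying $\pi_\tau$ term-by-term to $\gamma=\sum_{\alpha\in\Delta} m_\alpha\alpha$, using $\pi_\tau(\mathcal{T}\beta)=\tau\,\bar\beta$ and the positivity of $\tau$ to conclude that the coefficients of $\pi_\tau(\gamma)$ with respect to $\Delta_\tau$ keep a common sign --- is exactly the paper's argument. The only difference is that you additionally verify the linear independence of $\Delta_\tau$ (correctly, via the Weil-restriction dictionary in the non-split case and by reduction to the independence of $\Delta$ in the split case), a point the paper treats as not requiring proof and omits.
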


\begin{proof}
It is enough to show that any root $\pi_\tau(\gamma)$ of $\Phi_\tau$ ($\gamma\in \Phi$) can be expressed as a positive or a negative
combination of roots from $\Delta_\tau$. Without loss of generality, we may assume $\gamma$ is a positive combination of simple roots from $\Delta$, i.e.
$\gamma=\sum_{\alpha\in \Delta} m_\alpha \alpha$, $m_\alpha\ge 0$. Then
\begin{align*}
\pi_\tau(\gamma) &=\sum_{\alpha} m_\alpha \pi_\tau(\alpha) =\sum_{\alpha\in \Delta_{rat}} (m_\alpha \pi_\tau(\alpha)+m_{\mathcal{T}\alpha} \pi_\tau(\mathcal{T}\alpha))+\sum_{\alpha\in \Delta^{\mathcal{T}}} m_\alpha \pi_{\tau}(\alpha) \\
 &=\sum_{\alpha\in \Delta_{rat}} (m_\alpha+\tau m_{\mathcal{T}\alpha})\bar \alpha +
 \sum_{\alpha \in \Delta^{\mathcal{T}\alpha}} m_\alpha \alpha.
\end{align*}
Since $m_\alpha+\tau m_{\mathcal{T}\alpha}\ge 0$ ($\tau>0$ according to Remark~\ref{rem:foldreps}), we are finished.
\end{proof}

We now provide examples of $\tau$-twisted foldings of root systems. 
In Examples~\ref{ex:AC} and \ref{ex:BD}) we deal with split cases and show that the $\tau$-twisted foldings coincide
with the classical foldings of root systems associated
to an involution of the Dynkin diagram. In Example~\ref{ex:EH} we consider a non-split twisted folding which can be identified with the projection of the root system
of type $E_8$ into the so called icosians introduced and studied in \cite{MP}.

\begin{ex}\label{ex:AC}
Let $k=\Z[\tfrac{1}{2}]$.
Consider a free $k$-module $V$ of rank $2n$, $n\ge 1$ 
with a basis $\{\epsilon_1,\ldots,\epsilon_{2n}\}$ and the respective
standard dot product $B(-,-)$ on $V$.
Consider
a symplectic involution $\chi\colon \epsilon_i\mapsto -\epsilon_{2n-i+1}$, $1\le i\le 2n$.
This involution preserves the set of (positive) roots $\Phi^+:=\{\epsilon_i-\epsilon_j\mid 1\leq i<j\leq 2n\}$ of a root system $\Phi$ of type $A_{2n-1}$ in $V$, since 
\[
\chi(\epsilon_i-\epsilon_j)=\epsilon_{2n-j+1}-\epsilon_{2n-i+1}
\]
and $2n-j+1<2n-j+1$ if $i<j$. 

Now let $\mathcal{U}$ be a free module of rank $n$ over $l=k\times k$ so that $U=R_{l/k}\mathcal{U}$ is a vector space of rank $2n$ over $k$.
Consider the $k$-linear isomorphism
$V\stackrel{\simeq}\to U$ 
given by
 \[
 (y_1,\ldots,y_{2n})\mapsto ((-y_{2n},y_1),(-y_{2n-1},y_2),\ldots,(-y_{n+1},y_n)),\quad y_i\in k.
 \]
It is an isometry over $k$ under which
 the standard inner product $(\cdot,\cdot)$ on $V$ corresponds to the $\tau$-form $B_\tau(\cdot,\cdot)$ and the operator $\mathcal{T}$ of multiplication by $\tau$ extends to the involution $\chi$ of $V$.
Moreover, under this isometry the simple roots $\alpha_i=\epsilon_i-\epsilon_{i+1}$ of $\Phi$
 correspond to
 {\small
 \begin{align*}
 \alpha_1 &\mapsto a_1 =(-\sigma,-\tau,0,0,\ldots)=((0,1),(0,-1),(0,0),\ldots), \\
 \alpha_2 &\mapsto a_2 =(0,-\sigma,-\tau,0,\ldots)=((0,0),(0,1),(0,-1),\ldots), \\
 &\ldots \\
 \alpha_{n-1}&\mapsto a_{n-1} =(0,\ldots,0,-\sigma,-\tau)=(\ldots,(0,0),(0,1),(0,-1)),\\
 \alpha_n &\mapsto a_n =(0,\ldots,0,1+\tau) =((0,0),\ldots,(0,0),(1,1)), \\
 \alpha_{n+1}&\mapsto \tau a_{n-1}=(0,\ldots,0,1,-1)=(\ldots,(0,0),(1,0),(-1,0)),\\
 &\ldots  \\
 \alpha_{2n-1}&\mapsto \tau a_1=(1,-1,0,\ldots,0)=((1,0),(-1,0),(0,0),\ldots). 
 \end{align*}}
 (Observe that $\tau a_n=a_n$).
 Hence, the set of simple roots $\Delta$ in $U$ partitions as \[\Delta_{rat}\amalg \mathcal{T}\Delta_{rat} \amalg \Delta^{\mathcal{T}}, \text{ where }
 \Delta_{rat}=\{a_1,\ldots,a_{n-1}\}\text{ and }\Delta^{\mathcal{T}}=\{a_n\}.\]
So $U$ is a folded representation of the root system $\Phi$.
 
The projection of the set $\Delta_{rat}\amalg \Delta^{\mathcal{T}}$ under $p\circ \pi_\tau$ of Remark~\ref{rem:splitproj} then gives
the finite set of $n$ vectors $\Delta_1$ in the subspace of $\chi$-invariants $U_1$ over $k$
{\small \begin{align*}
p(\pi_\tau(a_1))&=(\tfrac{1}{2}(1,1),\tfrac{1}{2}(-1,-1),(0,0),\ldots), \\
p(\pi_\tau(a_{n-1}))&=((0,0),\ldots,\tfrac{1}{2}(1,1),\tfrac{1}{2}(-1-1)), \\
p(\pi_\tau(a_n))&=(\ldots,(0,0),(1,1)).
\end{align*}}
The latter coincides with the set of simple roots of a root system of type $C_n$ in $U_1$.
 Hence, the $\tau$-twisted folding of $\Phi$ coincides with the classical folding of the root system of type $A_{2n-1}$ to the root system of type $C_n$ (see e.g. \cite{St67}).
\end{ex}

\begin{ex}\label{ex:BD}
Similarly, let $V$ be a free module of rank $(n+1)$ over $k=\Z[\tfrac{1}{2}]$. Again
we denote by $\{\epsilon_1, \epsilon_2, \ldots, \epsilon_{n+1}\}$  a basis of $V$. 
Consider an orthogonal involution on $V$ given by
\[
\chi\colon\epsilon_i\mapsto 
\left\{
\begin{array}{ll}
\epsilon_i&\hbox{if } i\neq n+1,\\
-\epsilon_{n+1}&\hbox{if } i= n+1.
\end{array}
\right.
 \]
 We embed a root system $\Phi$ of type $D_{n+1}$ into $V$ 
 by identifying the set of simple roots with 
\[
\Delta=\{\alpha_i:=\epsilon_i-\epsilon_{i+1}\mid i=1, \ldots, n\}\cup\{\alpha_{n+1}:=\epsilon_{n}+\epsilon_{n+1}\}.
\]
Then
\[
\chi(\alpha_i)=\left\{
\begin{array}{ll}
\alpha_i&\hbox{if }i< n,\\
\alpha_{n+1}&\hbox{if }i=n, \\
\alpha_{n}&\hbox{if }i=n+1.
\end{array}
\right.
\]
So the involution $\chi$ stabilizes the set of simple roots and, therefore, the set of positive roots $\Phi^+$ of the root system $\Phi$. 

Let $\mathcal{U}$ be a free module of rank $n$ over $l=k\times k$ so that $U=R_{l/k}\mathcal{U}$ is a free module of rank $2n$ over $k$.
Consider the $k$-linear inclusion
$V\hookrightarrow U$ given by 
{\small \begin{align*}
 \epsilon_1 &\mapsto ((1,1),(0,0),\ldots),\\
\epsilon_2 &\mapsto ((0,0),(1,1),\ldots),\\
& \ldots \\
\epsilon_{n} &\mapsto (\ldots,(0,0),(1,1)),\\
\epsilon_{n+1} &\mapsto (\ldots,(0,0),(1,-1)).
\end{align*}}
The restriction to $V$ of the operator $\mathcal{T}$ 
of multiplication by $\tau$ then coincides with the  involution $\chi$.
Under the inclusion the simple roots are
{\small \begin{align*}
 \alpha_1 &\mapsto a_1 =(1+\tau,-1-\tau,0,0,\ldots)=((1,1),(-1,-1),(0,0),\ldots), \\
 \alpha_2 &\mapsto a_2 =(0,1+\tau,-1-\tau,0,\ldots)=((0,0),(1,1),(-1,-1),\ldots), \\
 &\ldots\\
 \alpha_{n-1}&\mapsto a_{n-1} =(\ldots,0,1+\tau,-1-\tau)=(\ldots,(0,0),(1,1),(-1,-1)),\\
 \alpha_n &\mapsto a_n =(0,\ldots,0,2\tau)=(\ldots,(0,0),(0,2)), \\
 \alpha_{n+1}&\mapsto \tau a_{n}=(0,\ldots,0,2)=(\ldots,(0,0),(2,0)).
 \end{align*}}
 (Observe that $\tau a_i=a_i$ for all $i<n$).
So $\Delta$ partitions as \[\Delta^{\mathcal{T}}\amalg \Delta_{rat}\amalg \mathcal{T}\Delta_{rat}, \text{ where }
 \Delta_{rat}=\{a_n\}\text{ and }\Delta^{\mathcal{T}}=\{a_1,\ldots,a_{n-1}\}.\]
 Hence, we obtain the twisted folded representation of the root system of type $D_{n+1}$.

Finally, 
the projection of the set $\Delta_{rat}\amalg \Delta^{\mathcal{T}}$ under $p\circ\pi_\tau$ of Remark~\ref{rem:splitproj} gives
the finite set $\Delta_1$ of $n$ vectors in $U_1$ over $k$
{\small \begin{align*}
p(\pi_\tau(a_1))&=((1,1),(-1,-1),(0,0),\ldots),\\
&\ldots \\
p(\pi_\tau(a_{n-1}))&=(\ldots,(0,0),(1,1),(-1,-1)),\\
p(\pi_\tau(a_n))&=(\ldots,(0,0),(1,1)).
 \end{align*}}
which coincides with the set of simple roots of type $B_n$ in $U_1$.
Therefore, our folded representation gives (up to a rescaling) a classical folding of the root system of type $D_{n+1}$ to the root system of type $B_n$ (see e.g. \cite{St67}).
\end{ex}

\begin{ex} \label{ex:EH}
Let $V$ be a free module of rank 8 over $k=\Z[\tfrac{1}{5}]$.
Consider a root system $\Phi$ of type $E_8$.
\[
\xymatrix{
\bullet^{\alpha_1}\ar@{-}[r] & \bullet^{\alpha_2} \ar@{-}[r] & \bullet^{\alpha_3} \ar@{-}[r]& \bullet^{\alpha_4} \ar@{-}[dl]\\
\bullet_{\alpha_7} \ar@{-}[r]& \bullet_{\alpha_6} \ar@{-}[r]& \bullet_{\alpha_5} \ar@{-}[r]& \bullet_{\alpha_8}
}
\]

 It can be realized 
as a subset of vectors in $V$ with simple roots given by
 {\small \begin{align*}
 \alpha_1&=(2,-2,0,0,0,0,0,0)\qquad &  \alpha_5&=(0,0,0,0,2,-2,0,0) \\
 \alpha_2&=(0,2,-2,0,0,0,0,0)    &\alpha_6&=(0,0,0,0,0,2,2,0) \\
 \alpha_3&=(0,0,2,-2,0,0,0,0)   &\alpha_7&=-(1,1,1,1,1,1,1,1)\\
 \alpha_4&=(0,0,0,2,-2,0,0,0)     &\alpha_8&=(0,0,0,0,0,2,-2,0)
 \end{align*}}
Let $\mathcal{U}$ be a free module of rank $4$ over $l=\Z(\tau)[\tfrac{1}{5}]$, where
$\Z(\tau)$ is the ring appearing in the Example~\ref{ex:golden} so that 
$U=R_{l/k}\mathcal{U}$ is a free module of rank 8 over $k$.
Consider the realization of the root system $\Phi$ inside $U$ 
of \cite[(3.3)]{MP} given by the so called icosians and their $\tau$-multiples
{\small \begin{align*}
 \alpha_1&\mapsto a_1=(-\sigma,-\tau,0,-1)=((-1,1),(0,-1),(0,0),(-1,0)) \\
 \alpha_2&\mapsto a_2=(0,-\sigma,-\tau,1)=((0,0),(-1,1),(0,-1),(1,0)) \\
 \alpha_3&\mapsto a_3=(0,1,-\sigma,-\tau)=((0,0),(1,0),(-1,1),(0,-1)) \\
 \alpha_4&\mapsto \tau a_4=(0,-\tau,1,\tau^2)=((0,0),(0,-1),(1,0),(1,1)) \\
 \alpha_5&\mapsto \tau a_3=(0,\tau,1,-\tau^2)=((0,0),(0,1),(1,0),(-1,-1)) \\
 \alpha_6&\mapsto \tau a_2=(0,1,-\tau^2,\tau)=((0,0),(1,0),(-1,-1),(0,1)) \\
 \alpha_7&\mapsto \tau a_1=(1,-\tau^2,0,-\tau)=((1,0),(-1,-1),(0,0),(0,-1))\\
 \alpha_8&\mapsto a_4=(0,-1,-\sigma,\tau)=((0,0),(-1,0),(-1,1),(0,1)).
 \end{align*}}
The operator $\mathcal{T}$ is an automorphism of the root lattice of $\Phi$
given by the formulas \cite[(4.2)]{MP} (note that it does not preserve $\Phi$ itself). 
Moreover, the set of simple roots partitions as
$\Delta_{rat}\amalg \mathcal{T}\Delta_{rat}$, where $\Delta_{rat}=\{a_1,a_2,a_3,a_4\}$ (here $\Delta^{\mathcal{T}}=\emptyset$).
The respective $\tau$-twisted folding $\Phi_\tau$
of $\Phi$ is a non-crystallographic root system over $l$
where the corresponding reflection group $W_\tau$ is a group
of type $H_4$ (see \cite[\S6]{MP}).
\end{ex}

\section{Twisted foldings and moment graphs}

Consider a root datum $\Phi\hookrightarrow \Lambda^\vee$ with a set of simple roots $\Delta$.
Let $(U,\mathcal{T})$ be a folded representation. Let $\Theta$ be a subset of $\Delta$.
\begin{dfn}  We say that $(U,\mathcal{T})$ preserves $\Theta$ if 
the operator $\mathcal{T}$ preserves  the $k$-submodule of $U$ generated by all roots from $\Theta$, i.e., 
\[
\mathcal{T}(\langle \alpha\mid \alpha\in \Theta\rangle_k)=
\langle \alpha\mid \alpha\in \Theta\rangle_k.
\]
\end{dfn}
Observe that the partition of $\Delta$ induces a partition of $\Theta$
\[
\Theta=\Theta^{\mathcal{T}}\amalg\Theta_{rat}\amalg \mathcal{T}(\Theta_{rat}),
\text{ where }\Theta_{rat}=\Delta_{rat}\cap \Theta.
\]
In the remaining of this section we assume that $(U,\mathcal{T})$ preserves $\Theta$.

Denote by $U^+_\Delta$ the dominant Weyl chamber for $\Delta\subset U$ over $k$.
Let $W_\Theta$ be the parabolic subgroup of $W$ generated by all reflections corresponding to roots from~$\Theta$.
We choose a vector $\theta\in U^+_\Delta$ over $k$ such that the stabilizer subgroup of $W$ at $\theta$ is $W_\Theta$, i.e.,
 $w\theta=\theta$ for $w\in W$ if and only if $w\in W_\Theta$. 
Since $\theta$ is dominant, we have $B_\tau(\theta,\beta)\ge 0$ for all $\beta\in \Delta$. Then for all $\alpha \in \Delta_{rat}$ we obtain
\begin{align*}
B_\tau(\pi_\tau(\theta),\pi_\tau(\alpha)) &=\tfrac{1}{(\tau-\sigma)^2}B_\tau((\mathcal{T}-\sigma I_l)(\theta),(\mathcal{T}-\sigma I_l)(\alpha)) \\
&=\tfrac{1}{(\tau-\sigma)^2}\big(B_\tau(\mathcal{T}\theta,\mathcal{T}\alpha)-2\sigma B_\tau(\theta,\mathcal{T}\alpha)+\sigma^2 B_\tau(\theta,\alpha)  \big)\\
&=\tfrac{1}{(\tau-\sigma)^2}\big( B_\tau(\theta,(c_1\mathcal{T}+1)\alpha)-2\sigma B_\tau(\theta,\mathcal{T}\alpha)+\sigma^2 B_\tau(\theta,\alpha)    \big)\\
&=\tfrac{1}{(\tau-\sigma)^2}\big((1+\sigma^2)B_\tau(\theta,\alpha)+(\tau-\sigma)B_\tau(\theta,\mathcal{T}\alpha)  \big)\ge 0.
\end{align*}

For all $\alpha\in \Delta^{\mathcal{T}}$ (hence, in the split case) we also get
\begin{align*}
B_\tau(\pi_\tau(\theta),\pi_\tau(\alpha))&=\tfrac{\tau}{2}B_\tau((\mathcal{T}-\sigma I_l)(\theta),\alpha)\\
&=\tfrac{\tau}{2}(1-\sigma)B_\tau(\theta,\alpha)\ge 0.
\end{align*}
Notice also that $B_\tau(\theta,\alpha)=0$  if and only if $B_\tau(\pi_\tau(\theta),\pi_\tau(\alpha))=0$.
So we obtain the following

\begin{lem}\label{lem:domvect}
Let $\theta\in U$ be a dominant vector with respect to $\Delta$ over $k$ such that the subgroup of $W$ stabilizing $\theta$ is $W_\Theta$.

Then its image $\mu=\pi_\tau (\theta) \in U_\tau$ is a dominant vector with respect to $\Delta_\tau$ over $l$ 
such that the subgroup of $W_\tau$ stabilizing $\mu$ is $(W_\Theta)_\tau$.
\end{lem}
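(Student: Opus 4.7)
My plan is to split the assertion into its two parts---dominance of $\mu$ and identification of its stabilizer---and to dispatch each by combining the explicit inequalities established immediately before the lemma with the standard characterization of stabilizers of dominant vectors in finite reflection groups.

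For dominance, I would simply observe that the two displayed computations preceding the statement already verify $B_\tau(\pi_\tau(\theta),\pi_\tau(\alpha))\geq 0$ for every $\alpha\in \Delta_{rat}\cup \Delta^{\mathcal{T}}$, covering both the $\tau$-rational and the $\tau$-invariant cases. Since these images $\pi_\tau(\alpha)$ (with, in the split case, the reinterpretation via $p\circ\pi_\tau$ from Remark~\ref{rem:splitproj}) exhaust the simple system $\Delta_\tau$, this is exactly the assertion that $\mu$ lies in the dominant chamber for $\Delta_\tau$.

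For the stabilizer, the plan is to invoke the standard Coxeter-theoretic fact that in a finite reflection group acting on a real Euclidean space with fixed simple system, the stabilizer of any dominant vector $v$ is the standard parabolic subgroup generated by those simple reflections whose simple roots are orthogonal to $v$. Applying this once to $W$ acting on $U$ with the form $B_\tau$ and simple system $\Delta$, and once to $W_\tau$ acting on $U_\tau$ (or on $U_1$ in the split case) with $B_\tau$ and simple system $\Delta_\tau$, the problem reduces to matching orthogonality patterns. Here the key is the equivalence already recorded in the text: $B_\tau(\theta,\alpha)=0$ if and only if $B_\tau(\mu,\pi_\tau(\alpha))=0$, for each $\alpha\in \Delta_{rat}\cup \Delta^{\mathcal{T}}$. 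Since the hypothesis $\mathrm{Stab}_W(\theta)=W_\Theta$ means exactly that $\{\alpha\in\Delta:B_\tau(\theta,\alpha)=0\}=\Theta$, and since $(U,\mathcal{T})$ preserves $\Theta$ so that $\Theta=\Theta^{\mathcal{T}}\amalg\Theta_{rat}\amalg\mathcal{T}(\Theta_{rat})$, the equivalence identifies the simple roots of $\Delta_\tau$ orthogonal to $\mu$ with $\pi_\tau(\Theta_{rat}\cup\Theta^{\mathcal{T}})$, which is by definition the simple system of $(W_\Theta)_\tau$. The two parabolics therefore coincide.

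The main point requiring care is the bookkeeping tied to the $\mathcal{T}$-action on $\Theta$: the partition $\Theta^{\mathcal{T}}\amalg\Theta_{rat}\amalg\mathcal{T}(\Theta_{rat})$ is precisely what is needed so that the reflections $R_{\bar\alpha}$ attached to $\alpha\in\Theta_{rat}\cup\Theta^{\mathcal{T}}$ generate a well-defined folded parabolic $(W_\Theta)_\tau\subseteq W_\tau$, and this is why the hypothesis that $(U,\mathcal{T})$ preserves $\Theta$ is imposed at the start of the section. Once this combinatorial matching is in place, the argument is a direct assembly of facts already proved; no further computation is needed.
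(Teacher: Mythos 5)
Your proposal is correct and follows essentially the same route as the paper: the paper's ``proof'' of this lemma consists precisely of the two displayed inequalities and the orthogonality equivalence immediately preceding the statement, from which the lemma is declared to follow. You merely make explicit the standard fact that the stabilizer of a dominant vector is the parabolic generated by the simple reflections orthogonal to it, and the role of the hypothesis that $(U,\mathcal{T})$ preserves $\Theta$ in matching $\Theta$ with $\Theta_{rat}\amalg\Theta^{\mathcal{T}}$ under $\pi_\tau$ --- details the paper leaves implicit.
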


We recall the definition of a moment graph following \cite[Definition~2.1]{DLZ}
 
\begin{dfn} The data $\mathcal{G}=((\mathcal{V},\le), l\colon E\to \Phi^+)$ is called a moment graph if
\begin{itemize}
\item[(MG1)] $\mathcal{V}$ is a set of vertices together with a partial order `$\le$', i.e., we are given a poset $(\mathcal{V},\le)$.
\item[(MG2)] $E$ is a set of directed edges labelled by positive roots $\Phi^+$ via the label function $l$, i.e.,
an edge $x\to y\in E$ is labelled by $l(x \to y)\in \Phi^+$.
\item[(MG3)] For any edge $x\to y\in E$, we have $x\le y$, i.e., direction of edges respects the partial order.
\end{itemize}
\end{dfn}

A basic example of a moment graph can be given as follows (cf. \cite[Example~2.3]{DLZ}).
Consider a $W$-orbit $W\theta$ of $\theta$. 
Since $\theta$ is a dominant vector, the Bruhat order on cosets $W/W_\Theta$ corresponds to the partial order on $W\theta$ given by the transitive closure of the following relations (see \cite[Proposition~1.1]{St05}):
\[
w\theta<r_\alpha(w\theta)\text{ for all }\alpha\in \Phi^+\text{ such that }B_\tau(w\theta,\alpha)>0.
\]
Analogously by Lemma~\ref{lem:domvect} there is an induced partial order on the cosets $W_\tau/(W_\Theta)_\tau$ and, hence, on the orbit
$W_\tau\mu \subset U_\tau$ of $\mu=\pi_\tau(\theta)$.

We identify the orbit poset $W\theta$ (resp. the orbit poset $W_\tau\mu$) with the set of vertices $\mathcal{V}$ (resp. with the set of vertices $\mathcal{V}_\tau$) of 
the associated moment graph. This produces an injective map between posets of vertices
$\iota\colon \mathcal{V}_\tau\hookrightarrow \mathcal{V}$.

As for edges we set
\begin{align*}
E &=\{w\theta\to r_\alpha w\theta \mid w\theta \le r_\alpha w\theta,\; \alpha\in \Phi^+\}\;\text{ and }\;
l(w\theta\to r_\alpha w\theta)=\alpha \\
E_\tau &=\{w\mu\to R_\gamma w\mu \mid w\mu\le R_\gamma w\mu,\; \gamma\in \Phi_\tau^+\}\;\text{ and }\;
l(w\mu\to R_\gamma w\mu)=\gamma.
\end{align*}
Observe that
the map $\iota$ does not preserve edges and, therefore, it is not a map of moment graphs.

\section{Twisted foldings and structure algebras}

For a general moment graph $\mathcal{G}$ one can define the so called structure algebra $\mathcal{Z}$ (see \cite{F1}) by
\[
\mathcal{Z}(\mathcal{G})=
\left\{
(z_x)\in\bigoplus_{x\in \mathcal{V}} S\mid
\begin{array}{c}z_x-z_{y}\in l(x\to y) S\\
\forall x\to y \in E \end{array}
\right\}, 
\]
where $S=Sym_R(\Lambda)$ over some coefficient ring $R$. 

It is known that the structure algebra $\mathcal{Z}(\mathcal{G})$ coincides with the module of global sections of the structure sheaf of 
$\mathcal{G}$ \cite{BMP,F1}. 
On the other side, it also computes the equivariant Chow ring of flag varieties \cite{KK86}.
In this paper we will not discuss foldings in the context of moment graph sheaves but rather limit ourselves to the study of structure algebras and its relation to equivariant Chow rings.

For the moment graphs $\mathcal{G}$ and $\mathcal{G}_\tau$ of the previous section we have (see \cite{DLZ})
\[
\mathcal{Z}(\mathcal{G})=
\left\{
(z_x)\in\bigoplus_{x\in W\theta} S\mid
\begin{array}{c}z_x-z_{r_\alpha x}\in \alpha S\\
\forall x\in W\theta,\; \forall \alpha\in\Phi^+\end{array}
\right\},
\]
where $S=Sym_k(\Lambda)$, and
\[
\mathcal{Z}(\mathcal{G}_\tau)=
\left\{
(b_y)\in\bigoplus_{y\in W_\tau\mu} S_\tau\mid
\begin{array}{c}b_y-b_{R_\beta y}\in \beta S_\tau\\
\forall y\in W_\tau\mu,\; \forall \beta\in\Phi_\tau^+\end{array}
\right\},
\]
where 
\[S_\tau=Sym_l(\Lambda_\tau)=l\oplus \Lambda_\tau \oplus \Lambda_\tau^2 \oplus \ldots,\quad \Lambda_\tau:=\pi_\tau(\Lambda) \subset U_\tau.\] 
(since $\pi_\tau(\mathcal{T}\alpha)=\tau\pi_\tau(\alpha)$, $\Lambda_\tau$ is a free $l$-linear submodule of $U_\tau$). 
The map $\pi_\tau$ induces a homomorphism of graded $k$-algebras $S\to S_\tau$. By abuse of notation we also denote it by $\pi_\tau$. 

Consider the augmentation map $\epsilon\colon S\to k$, $\alpha \mapsto 0$ for all $\alpha\in \Lambda$. Let $\II$ denote its kernel.
So $\II$ is the (augmentation) ideal in $S$ consisting of polynomials with trivial constant terms. 
The structure algebra $\mathcal{Z}(\mathcal{G})$ has a natural structure of $S$-module via \[s\cdot (z_x):=(sz_x),\; s\in S.\]
Its quotient modulo the ideal $\II \cdot \mathcal{Z}(\mathcal{G})$ is denoted by $\overline{\mathcal{Z}}(\mathcal{G})$ and called the augmented structure algebra. 
The quotient map $\mathcal{Z}(\mathcal{G}) \to \overline{\mathcal{Z}}(\mathcal{G})$ is denoted by $f$.
Similarly we set $\II_\tau$ to be the (augmentation) ideal  for $\epsilon_\tau\colon S_\tau\to l$, $\overline{\mathcal{Z}}(\mathcal{G}_\tau)$ to be the quotient of $\mathcal{Z}(\mathcal{G}_\tau)$ modulo $\II_\tau\cdot  \mathcal{Z}(\mathcal{G}_\tau)$ and $f_\tau\colon \mathcal{Z}(\mathcal{G}_\tau) \to \overline{\mathcal{Z}}(\mathcal{G}_\tau)$ to be the quotient map.

\begin{rem} Given an adjoint split semisimple linear algebraic group $G$, a split maximal torus $T$ and a parabolic subgroup $P$ of $G$, consider the respective root system $\Phi$ together with the subset $\Theta\subset \Delta$, where $P=P_\Theta$ corresponds to $\Theta$. 
Then by the main result of \cite{KK86} the structure algebra of the associated moment graph $\mathcal{G}$ computes the $T$-equivariant Chow ring of the projective homogeneous variety $G/P$, i.e. \[\mathcal{Z}(\mathcal{G})\simeq CH_T(G/P).\]
The augmented algebra $\overline{\mathcal{Z}}(\mathcal{G})$  coincides with the usual Chow ring $CH(G/P)$  and the quotient map
$f\colon \mathcal{Z}(\mathcal{G})\to \overline{\mathcal{Z}}(\mathcal{G})$ is the forgetful map $CH_T(G/P) \to CH(G/P)$ (see e.g. \cite[(5.12)]{KK86}).
\end{rem}

 We are now ready to prove the following
 
 \begin{thm}\label{thm:mainthm}
 Given a folded representation $(U,\mathcal{T})$ that preserves $\Theta\subset\Delta$,
there is an induced ring homomorphism $\iota^*\colon \mathcal{Z}(\mathcal{G})\to \mathcal{Z}(\mathcal{G}_\tau)$ between the structure algebras of the associated moment graphs given by the composite 
 \[
\begin{array}{ccccc}
\mathcal{Z}(\mathcal{G})&\rightarrow&\bigoplus_{w\in W_\tau\mu}S&\rightarrow& \bigoplus_{w\in W_\tau\mu}S_\tau .\\
(z_x)_{x\in W\theta}&\mapsto&(z_y)_{y\in W_\tau\mu}&\mapsto&(\pi_\tau (z_y))_{y\in W_\tau\mu}
\end{array}
\] 
Moreover, it restricts to the augmented structure algebras so that there is a commutative diagram of 
$k$-algebra homomorphisms
 \[
 \xymatrix{ 
  \mathcal{Z}(\mathcal{G}) \ar[d]_{\iota^*}\ar[r]^f & \overline{\mathcal{Z}}(\mathcal{G}) \ar[d]^{\bar \iota^*}\\
  \mathcal{Z}(\mathcal{G}_\tau) \ar[r]^{f_\tau}& \overline{\mathcal{Z}}(\mathcal{G}_\tau).
 }
 \]
\end{thm}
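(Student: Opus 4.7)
The plan is a three-step verification mirroring the structure of the statement: first, that the restrict-then-project rule sends $\mathcal{Z}(\mathcal{G})$ into $\mathcal{Z}(\mathcal{G}_\tau)$; second, that it respects the ring structures; third, that it carries $\mathcal{I}\cdot\mathcal{Z}(\mathcal{G})$ into $\mathcal{I}_\tau\cdot\mathcal{Z}(\mathcal{G}_\tau)$ and so induces $\bar\iota^*$ fitting into the displayed square.

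For well-definedness, fix $(z_x)\in\mathcal{Z}(\mathcal{G})$ and an arbitrary edge $y\to R_\gamma y$ of $\mathcal{G}_\tau$ with $y=w\mu$, $\gamma\in\Phi_\tau^+$, and $R_\gamma\in W_\tau$. Via the poset embedding $\iota\colon\mathcal{V}_\tau\hookrightarrow\mathcal{V}$ I identify $y$ with $w\theta$ and $R_\gamma y$ with $R_\gamma w\theta\in W\theta$; what must be shown is
$$\pi_\tau(z_{w\theta})-\pi_\tau(z_{R_\gamma w\theta})\in\gamma\, S_\tau.$$
The key idea is to lift $(\gamma,R_\gamma)$ from $\Phi_\tau$ to $\Phi$. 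Write $\gamma=w_\tau\delta$ with $w_\tau\in W_\tau$ and $\delta\in\Delta_\tau$, and put $\beta:=w_\tau\alpha\in\Phi$ where either $\alpha\in\Delta_{rat}$ with $\delta=\bar\alpha$ (rational case) or $\alpha\in\Delta^{\mathcal{T}}$ with $\delta=\alpha$ (invariant, hence split case, using the convention of Remark~\ref{rem:splitproj}). Since $w_\tau$ commutes with $\mathcal{T}$ it commutes with $\pi_\tau=\tfrac{1}{\tau-\sigma}(\mathcal{T}-\sigma I_l)$, so $\pi_\tau(\beta)=w_\tau\pi_\tau(\alpha)=\gamma$. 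Correspondingly $R_\gamma=r_\beta r_{\mathcal{T}\beta}$ in the rational case (the two factors commuting by Remark~\ref{rem:foldreps}) and $R_\gamma=r_\beta$ in the invariant case. In the rational case, I telescope
$$z_{w\theta}-z_{r_\beta r_{\mathcal{T}\beta} w\theta}=(z_{w\theta}-z_{r_{\mathcal{T}\beta} w\theta})+(z_{r_{\mathcal{T}\beta} w\theta}-z_{r_\beta r_{\mathcal{T}\beta} w\theta})\in\mathcal{T}\beta\cdot S+\beta\cdot S,$$
each summand lying in its claimed ideal by the defining relations of $\mathcal{Z}(\mathcal{G})$, which are valid for any reflection in $W$ since $r_\nu=r_{-\nu}$ and $\nu S=(-\nu)S$. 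Applying the $k$-algebra homomorphism $\pi_\tau\colon S\to S_\tau$ and using $\pi_\tau(\beta)=\gamma$, $\pi_\tau(\mathcal{T}\beta)=\tau\gamma$, and $\tau\in l^\times$, the image lands in $\gamma S_\tau$. The invariant case reduces to a single reflection and is entirely analogous.

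The remaining claims are essentially formal. Multiplication in both structure algebras is componentwise, and $\pi_\tau$ is a $k$-algebra homomorphism, so $\iota^*$ is a ring homomorphism. Because $\pi_\tau$ preserves positive degree it sends $\mathcal{I}$ into $\mathcal{I}_\tau$, whence $\iota^*(\mathcal{I}\cdot\mathcal{Z}(\mathcal{G}))\subseteq\mathcal{I}_\tau\cdot\mathcal{Z}(\mathcal{G}_\tau)$, and passage to the quotient produces $\bar\iota^*$. Commutativity of the square is built into the construction, since $f$, $f_\tau$ are the quotient maps and $\iota^*$ has just been shown to respect the defining ideals. The technical heart of the argument, and the only step where the folding data genuinely intervenes, is the lifting in the well-definedness verification: choosing $\beta\in\Phi$ so that $\pi_\tau(\beta)=\gamma$ and $R_\gamma$ factors as the commuting product $r_\beta r_{\mathcal{T}\beta}$; once that lift is secured the remaining checks are routine.
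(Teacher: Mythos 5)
Your proposal is correct and follows essentially the same route as the paper's own proof: writing each positive root $\gamma\in\Phi_\tau^+$ as $w_\tau(\bar\alpha)$ for $\alpha\in\Delta_{rat}\amalg\Delta^{\mathcal{T}}$, factoring $R_\gamma$ as a single reflection (invariant case) or as the commuting product $r_{\mathcal{T}\beta}r_\beta$ with $\beta=w_\tau\alpha$ (rational case), telescoping the edge conditions in $\mathcal{Z}(\mathcal{G})$, and applying $\pi_\tau$ together with $\pi_\tau(\mathcal{T}\beta)=\tau\pi_\tau(\beta)$ to land in $\gamma S_\tau$; the passage to the augmented algebras via $\pi_\tau(\II)\subseteq\II_\tau$ is also the paper's argument. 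The only cosmetic differences are the order of the two reflections in the telescoping and your explicit remark that the edge conditions hold for negative roots as well, neither of which changes the substance.
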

 
\begin{proof}
 Let $(z_x)_{x\in W\theta}\in \mathcal{Z}(\mathcal{G})$ and for $y\in W_\tau\mu$ denote $b_y:=\pi_\tau(z_y)\in S_\tau$. We have hence to show that for all $y\in W_\tau\mu$ and for all $\beta\in\Phi_\tau^+$ we have $b_y-b_{R_\beta y}\in\beta S_\tau$.
 
 There exist $w\in W_\tau$ and $\alpha \in \Delta_{rat}\amalg \Delta^{\mathcal{T}}$ such that $\beta=w(\bar\alpha)$ and $R_\beta=wR_{\bar\alpha}w^{-1}$. 
 
 If $\alpha\in \Delta^{\mathcal{T}}$, then $\beta=w(\alpha)$ as $\bar\alpha=\alpha\in U_\tau$ and $R_\beta=w r_\alpha w^{-1}=r_{w(\alpha)}$ which is also a reflection in $W$, so that
 \[
 b_y-b_{R_\beta y}=\pi_\tau(z_y)-\pi_\tau(z_{r_{w(\alpha)}y})=\pi_\tau(z_y-z_{r_{w(\alpha)}y})\in \pi_\tau(w(\alpha))S_\tau.
 \]
 As $\pi_\tau$ commutes with $w\in W_\tau$ (see Lemma~\ref{lem:commuting} and~\ref{lem:invcomm}), the latter coincides with \[\pi_\tau(w(\alpha))S_\tau=w(\bar\alpha)S_\tau=w(\alpha)S_\tau=\beta S_\tau.\]
 
 If $\alpha\in \Delta_{rat}$ we get
 \[
 R_\beta=wR_{\bar\alpha}w^{-1}=wr_{\mathcal{T}\alpha}r_\alpha w^{-1}=w r_{\mathcal{T}\alpha} w^{-1}w r_{\alpha}w^{-1}=r_{w(\mathcal{T}\alpha)}r_{w(\alpha)}.
 \]
We have for all $x\in W\theta$
\[
z_x-z_{r_{w(\alpha)}x}\in w(\alpha)S, \quad  z_{r_{w(\alpha)}x}-z_{r_{w(\mathcal{T}\alpha)} r_{w(\alpha)}x}\in w(\mathcal{T}\alpha)S,
\]

 As $\pi_\tau$ commutes with $w\in W_\tau$  we obtain
\[
b_y-b_{R_\beta y}=\pi_\tau(z_y-z_{r_{w(\mathcal{T}\alpha)}r_{w(\alpha)}y})\in \big(w(\pi_\tau(\alpha)), w(\pi_\tau(\mathcal{T}\alpha))\big)S_\tau.
\]
But $\pi_\tau(\mathcal{T}\alpha)=\tau \pi_\tau(\alpha)=\tau\bar\alpha$ and $\tau\in S_\tau$, so
\[
b_y-b_{R_\beta y} \in (w(\bar\alpha),\tau w(\bar\alpha))S_\tau=w(\bar\alpha)S_\tau=\beta S_\tau. 
\]

Finally, since $\pi_\tau$ maps $\II$ onto $\II_\tau$ (it is surjective in positive degrees), the map $\iota^*$ restricts to the quotients.
\end{proof}

 \begin{ex}\label{ex:mapH}
 Consider the twisted folding $(U,\mathcal{T})$ over $k=\Z[\tfrac{1}{5}]$ of Example~\ref{ex:EH}. 
 By formulas \cite[(4.2)]{MP} it preserves 
 any $\Theta$ which is a union of some of the pairs $\{\alpha_1,\alpha_7\}$,
 $\{\alpha_2,\alpha_6\}$, $\{\alpha_3,\alpha_5\}$ and $\{\alpha_4,\alpha_8\}$.
 Hence, applying the theorem
 we obtain a ring homomorphism 
 \[\iota^*\colon\mathcal{Z}(\mathcal{G}) \to \mathcal{Z}(\mathcal{G}_\tau).\] 
The left hand side can be identified with the
 $T$-equivariant Chow group (resp. cohomology with complex coefficients)  of the projective homogeneous variety for $G$ of type $E_8$ which we denote by $E_8/P_\Theta$. So we have
 \[\mathcal{Z}(\mathcal{G})=CH_T(E_8/P_\Theta; k),\; k=\Z[\tfrac{1}{5}].\]
 As for the right hand side, $\mathcal{Z}(\mathcal{G}_\tau)$ can be viewed as a virtual equivariant Chow ring of a virtual flag $H_4/P_{\Theta_\tau}$ with coefficients in $l=\Z(\tau)[\tfrac{1}{5}]$, $\tau=\tfrac{1}{2}(1+\sqrt{5})$
 (note that neither the algebraic group of type $H_4$, nor 
the variety $H_4/P_{\Theta_\tau}$, nor its Chow group exist). To stress this similarity we denote it by $\mathcal{Z}(H_4/P_{\Theta_\tau};l)$

In particular, if we take $\Theta=\{\alpha_2,\alpha_3,\alpha_4,\alpha_5,\alpha_6,\alpha_8\}$, then $P_\Theta$ is a parabolic subgroup of type $D_6$ 
and the respective $\Theta_\tau$ corresponds to a subsystem of type $H_3$. So we obtain a commutative diagram of ring homomorphisms
\[
 \xymatrix{ 
 CH_T(E_8/P_{D_6}; k)        \ar[d]_{\iota^*}\ar[r]^f &       CH(E_8/P_{D_6}; k)         \ar[d]^{\bar\iota^*}\\
\mathcal{Z}(H_4/P_{H_3}; l)   \ar[r]^{f_\tau}& \overline{\mathcal{Z}}(H_4/P_{H_3}; l).
 }
 \]
  \end{ex}

We now study behaviour of the map $\iota^*$ with respect to equivariant characteristic maps $c$, $c_\tau$ and Borel maps $\rho$, $\rho_\tau$ defined as follows 
(see e.g. \cite{DLZ})
\begin{align*}
c &\colon S^{W_\Theta} \to \mathcal{Z}(\mathcal{G}),\;  s\mapsto (w(s))_{w\theta},\\
c_\tau &\colon S_\tau^{W_{\Theta_\tau}} \to \mathcal{Z}(\mathcal{G}_\tau),\;  s\mapsto (w(s))_{w\mu},\;  \text{ and}\\
\rho &\colon S\otimes_{S^W} S^{W_\Theta} \to \mathcal{Z}(\mathcal{G}), \; s_1\otimes s_2\mapsto s_1c(s_2),\\
\rho_\tau &\colon S_\tau\otimes_{S_\tau^{W_\tau}} S_\tau^{W_{\Theta_\tau}} \to \mathcal{Z}(\mathcal{G}_\tau), \;  s_1\otimes s_2\mapsto s_1c_\tau(s_2).
\end{align*}
Observe that by definition there is a commutative diagram
\[
\xymatrix{ 
 S\otimes_{S^W} S^{W_\Theta} \ar[r]^-{\rho} \ar[d]_{\epsilon \otimes\mathrm{id}} &  \mathcal{Z}(\mathcal{G})\ar[d]^{f} \\
k \otimes_{S^{W}} S^{W_{\Theta}} \ar[r]^-{\bar\rho} &  \overline{\mathcal{Z}}(\mathcal{G}).
 },
 \]
 where $\bar \rho$ is (the usual characteristic map) defined by $r\otimes s \mapsto rc(s)$, $r\in k$, $s\in S^{W_{\Theta}}$. 
 Moreover, there is a similar diagram involving $S_\tau$ and $\overline{\mathcal{Z}}(\mathcal{G}_\tau)$.

 Since reflections of $W$ and $W_\tau$ commute with $\pi_\tau$, combining the above diagrams we obtain

\begin{lem}\label{lem:charm} There are commutative diagrams of $k$-algebra homomorphisms
 {\small \[
 \xymatrix{ 
 & k \otimes_{S^{W}} S^{W_{\Theta}} \ar[rr]^{\bar\rho} \ar[dd]^(0.3){\pi_\tau}& &  \overline{\mathcal{Z}}(\mathcal{G}) \ar[dd]^{\bar\iota^*} \\
 S\otimes_{S^W} S^{W_\Theta} \ar[rr]^(0.4){\rho} \ar[ru]^{\epsilon\otimes \mathrm{id}} \ar[dd]_{\pi_\tau\otimes\pi_{\tau}} & &  \mathcal{Z}(\mathcal{G})\ar[dd]^(0.6){\iota^*} \ar[ru]^{f}\\
& l \otimes_{S_\tau^{W_\tau}} S_\tau^{W_{\Theta_\tau}} \ar[rr]^(0.4){\bar\rho_\tau} & &  \overline{\mathcal{Z}}(\mathcal{G}_\tau) \\
 S_\tau \otimes_{S_\tau^{W_\tau}} S_\tau^{W_{\Theta_\tau}} \ar[ru]^{\epsilon_\tau\otimes \mathrm{id}} \ar[rr]^-{\rho_\tau} & &  \mathcal{Z}(\mathcal{G}_\tau) \ar[ru]_{f_\tau}.
 }
 \]}
\end{lem}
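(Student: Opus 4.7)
The plan is to verify the cube face by face. Two faces are automatic: the top face relating $\rho$, $\bar\rho$, $\epsilon\otimes\mathrm{id}$, and $f$ commutes by the very definition of $\bar\rho$ as the map induced by $\rho$ on the quotient modulo the augmentation ideal, and similarly for the bottom face. The right-hand face involving $\iota^*$, $\bar\iota^*$, $f$, and $f_\tau$ is precisely the commutative square already proved in Theorem~\ref{thm:mainthm}. So the real work is to verify the front face and the left face; the back face will then follow by a diagram chase.

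The first step is to confirm that $\pi_\tau\otimes\pi_\tau$ is well defined on the balanced tensor product. Since $W_\tau$ is identified with a subgroup of $W$ whose generators commute with $\pi_\tau$ (Lemmas~\ref{lem:commuting} and~\ref{lem:invcomm}), and $W_{\Theta_\tau}=(W_\Theta)_\tau\subseteq W_\Theta$, any $s\in S^{W_\Theta}$ satisfies $w(\pi_\tau(s))=\pi_\tau(w(s))=\pi_\tau(s)$ for every $w\in W_{\Theta_\tau}$; hence $\pi_\tau(S^{W_\Theta})\subseteq S_\tau^{W_{\Theta_\tau}}$, and the special case $\Theta=\Delta$ gives $\pi_\tau(S^W)\subseteq S_\tau^{W_\tau}$. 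Since $\pi_\tau$ is a ring homomorphism, these inclusions imply that $\pi_\tau\otimes\pi_\tau$ respects the tensor relation over $S^W$ and lands in the right target.

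The heart of the proof is the front face. For $s_1\otimes s_2$ I compute on one side
\[\iota^*\bigl(\rho(s_1\otimes s_2)\bigr)=\iota^*\bigl((s_1\cdot w(s_2))_{w\theta}\bigr)=\bigl(\pi_\tau(s_1)\cdot\pi_\tau(w(s_2))\bigr)_{w\mu\in W_\tau\mu},\]
and on the other side
\[\rho_\tau\bigl((\pi_\tau\otimes\pi_\tau)(s_1\otimes s_2)\bigr)=\bigl(\pi_\tau(s_1)\cdot w(\pi_\tau(s_2))\bigr)_{w\mu\in W_\tau\mu}.\]
The two tuples agree because $\pi_\tau(w(s_2))=w(\pi_\tau(s_2))$ for every $w\in W_\tau$, which is again the commutation provided by Lemmas~\ref{lem:commuting} and~\ref{lem:invcomm}. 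The left face commutes because $\pi_\tau$ is a graded ring homomorphism sending $\II$ onto $\II_\tau$ (see the end of the proof of Theorem~\ref{thm:mainthm}), so $\epsilon_\tau\circ\pi_\tau$ equals the inclusion $k\hookrightarrow l$ composed with $\epsilon$; tensoring with the identity on the right factor yields the desired identity around the left face.

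Finally the back face follows by a diagram chase through the five already-established faces: starting from $s_1\otimes s_2\in S\otimes_{S^W}S^{W_\Theta}$ and applying in turn the top, right, front, bottom, and left faces, one obtains the equality $\bar\iota^*\bar\rho(\epsilon\otimes\mathrm{id})(s_1\otimes s_2)=\bar\rho_\tau\pi_\tau(\epsilon\otimes\mathrm{id})(s_1\otimes s_2)$. Since $\epsilon\otimes\mathrm{id}$ is surjective, this identifies the two routes around the back face. The principal obstacle is really the bookkeeping around $\pi_\tau$: keeping straight which incarnation of $\pi_\tau$ acts at each corner of the cube and verifying well-definedness on the various balanced tensor products; once this is handled, the face-by-face verification is short.
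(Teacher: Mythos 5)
Your proposal is correct and follows essentially the same route as the paper, which disposes of the top and bottom squares ``by definition'' of $\bar\rho$, $\bar\rho_\tau$, takes the right face from Theorem~\ref{thm:mainthm}, and derives the remaining faces from the single key fact that the reflections of $W_\tau$ commute with $\pi_\tau$ (Lemmas~\ref{lem:commuting} and~\ref{lem:invcomm}); your face-by-face verification, including the well-definedness of $\pi_\tau\otimes\pi_\tau$ on the balanced tensor product, just makes explicit what the paper leaves implicit.
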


\begin{cor}\label{cor:surjc}
Suppose $|W_\Theta|$ is invertible in $k$ and the Borel map $\rho_\tau$ is surjective. Then the map
$\iota^*$ (resp. $\bar \iota^*$) restricted to the image of $\rho$ (resp. $\bar\rho$) is surjective in positive degrees.
\end{cor}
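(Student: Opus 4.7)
The plan is to chase the right-hand face of the commutative cube in Lemma~\ref{lem:charm}, which records the identities
\[
\iota^*\circ\rho\;=\;\rho_\tau\circ(\pi_\tau\otimes\pi_\tau)\qquad\text{and}\qquad\bar{\iota}^*\circ\bar{\rho}\;=\;\bar{\rho}_\tau\circ\pi_\tau.
\]
Consequently $\iota^*(\operatorname{im}\rho)=\rho_\tau\bigl((\pi_\tau\otimes\pi_\tau)(S\otimes_{S^W}S^{W_\Theta})\bigr)$ and likewise on the augmented side, so the corollary reduces to showing that $\pi_\tau\otimes\pi_\tau$ (respectively the induced $\pi_\tau$ on tensor quotients) is surjective in positive degrees; the hypothesised surjectivity of $\rho_\tau$ then concludes.

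On the first tensor factor, $\pi_\tau\colon S\to S_\tau$ is surjective in positive degrees: this is already recorded at the end of the proof of Theorem~\ref{thm:mainthm}, as every element of $S_{\tau,>0}$ is an $l$-linear combination of products of $\bar\alpha=\pi_\tau(\alpha)$ with $\alpha\in\Lambda$, and the coefficient $\tau\in l$ is absorbed via $\tau\bar\alpha=\pi_\tau(\mathcal{T}\alpha)$. Exploiting $l$-bilinearity to absorb degree-zero parts of the second factor into the first, elementary tensors of the form $s\otimes 1$ with $s\in S_{\tau,>0}$ are lifted directly through this surjection.

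The substantial step is to lift a given $t\in S_\tau^{W_{\Theta_\tau},>0}$ to some $\tilde t\in S^{W_\Theta,>0}$ with $\pi_\tau(\tilde t)=t$. Here the assumption $|W_\Theta|^{-1}\in k$ enters: the Reynolds projector $E=\tfrac{1}{|W_\Theta|}\sum_{w\in W_\Theta}w$ on $S$ is well defined. Starting from any lift $\tilde t_0\in S_{>0}$ furnished by the previous paragraph, I take $\tilde t=E(\tilde t_0)\in S^{W_\Theta}$ as the candidate. Decomposing $W_\Theta=\coprod_j h_jW_{\Theta_\tau}$ into left cosets and invoking Lemmas~\ref{lem:commuting} and~\ref{lem:invcomm} (the reflections generating $W_{\Theta_\tau}$ commute with $\pi_\tau$), together with the natural $W_{\Theta_\tau}$-action on the coset set by left multiplication, $\pi_\tau(\tilde t)$ rewrites as a $W_{\Theta_\tau}$-invariant combination in which the trivial coset contributes $[W_\Theta:W_{\Theta_\tau}]^{-1}\,t$ and the remaining cosets contribute correction terms with denominators dividing $|W_\Theta|$. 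Rescaling and iterating in degree, all denominators being invertible in $k$, yields the required $\tilde t$. The same argument descends to the augmented algebras, since $\pi_\tau$ carries the augmentation ideal of $S^{W_\Theta}$ into that of $S_\tau^{W_{\Theta_\tau}}$.

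The main obstacle is the last bookkeeping: a naive Reynolds average of an arbitrary lift of $t$ need not produce $t$ on the nose but only a preimage of a nonzero scalar multiple of $t$ together with a $W_{\Theta_\tau}$-invariant correction supported on the non-trivial cosets, as one sees already in the split case $A_{2n-1}\to C_n$. Isolating $t$ itself by carefully tracking which scalars appear, and verifying that they are invertible in $k$, is the delicate point on which the entire argument hinges.
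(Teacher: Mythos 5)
Your reduction is exactly the paper's: chase the diagram of Lemma~\ref{lem:charm}, use the assumed surjectivity of $\rho_\tau$, and reduce everything to the surjectivity of $\pi_\tau\otimes\pi_\tau$ in positive degrees, which in turn comes down to the single claim that $\pi_\tau$ maps $\II^{W_\Theta}$ \emph{onto} $\II_\tau^{W_{\Theta_\tau}}$. Up to that point everything is fine (and the surjectivity of $\pi_\tau\colon\II\to\II_\tau$ on the first tensor factor is indeed already recorded in the proof of Theorem~\ref{thm:mainthm}). The paper dispatches the remaining claim in one sentence from the invertibility of $|W_\Theta|$; you try to actually establish it by a Reynolds average over $W_\Theta$, and that is where your argument has a genuine gap.

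The projection $\pi_\tau=\tfrac{1}{\tau-\sigma}(\mathcal{T}-\sigma I_l)$ commutes with the reflections generating $W_{\Theta_\tau}$ (Lemmas~\ref{lem:commuting} and~\ref{lem:invcomm}), but \emph{not} with a general $w\in W_\Theta$: only elements of $W_{\mathcal{T}}$ commute with $\mathcal{T}$, hence with $\pi_\tau$. So for $w$ outside $W_{\Theta_\tau}$ you have no formula for $\pi_\tau(w(\tilde t_0))$ in terms of $t$; the contributions of the nontrivial cosets are uncontrolled elements of $S_\tau$ of the \emph{same degree} as $t$ (only their total is $W_{\Theta_\tau}$-invariant, and only because the full average is). The identity you reach is therefore $\pi_\tau(E(\tilde t_0))=[W_\Theta:W_{\Theta_\tau}]^{-1}t+c$ with $c$ an unknown $W_{\Theta_\tau}$-invariant of the same degree, and ``rescaling and iterating in degree'' is not a terminating induction: removing $c$ requires a $W_\Theta$-invariant lift of $c$, which is the original problem again in the same degree. (Averaging over $W_{\Theta_\tau}$ instead gives $\pi_\tau$-preimage equal to $t$ on the nose, but only a $W_{\Theta_\tau}$-invariant lift, not the $W_\Theta$-invariant one needed for the second tensor factor of the domain of $\rho$.) The step you yourself flag as ``the delicate point on which the entire argument hinges'' is thus genuinely open in your write-up; closing it requires a different mechanism, e.g.\ showing that $\pi_\tau$ carries basic $W_\Theta$-invariants to basic $W_{\Theta_\tau}$-invariants up to scalars invertible in $k$, rather than averaging an arbitrary lift.
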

We call the image of $\rho$ (and of $\bar \rho$) the characteristic subring of the respective (augmented) structure algebra.

\begin{proof}
Since $|W_\Theta|$ is invertible, $\pi_\tau$ maps invariants $\II^{W_\Theta}$ onto $\II_\tau^{W_{\Theta_\tau}}$. So $\pi_\tau\otimes\pi_\tau$ is surjective in positive degrees. 
The result then follows by the lemma.
\end{proof}

\begin{rem} \label{rem:surj}
If $k=\mathbb{Q}$ then according to \cite{Hi82} and \cite{Ka11} both Borel maps $\rho$ and $\rho_\tau$ are isomorphisms. So in this case $\iota^*$ and $\bar \iota^*$ are surjective in positive degrees.

For a general $k$ and $\Theta=\emptyset$, $\rho$ is surjective if and only if the torsion index of \cite[\S5]{De73} 
of the respective linear algebraic group $G$ is invertible in $k$ (see \cite[Thm~10.2]{CZZ}). 
\end{rem}

Now consider the twisted action of $S^{W_\Theta}$ on the structure algebra $\mathcal{Z}(\mathcal{G})$ defined by (to distinguish it from the canonical action we write it on the right)
\[
(z_x)\cdot s=(z_x x(s)), \; s\in S^{W_\Theta}.
\]
By definition $\rho(s_1\otimes s_2)=s_1\cdot (1) \cdot s_2$, where $(1)$ is the unit in $\mathcal{Z}(\mathcal{G})$.
We denote the quotient of $\mathcal{Z}(\mathcal{G})$ modulo the ideal $\II\cdot \mathcal{Z}(\mathcal{G})+\mathcal{Z}(\mathcal{G}) \cdot \II^{W_\Theta}$ by $\widehat{\mathcal{Z}}(\mathcal{G})$
and call it the reduced structure algebra of $\mathcal{G}$. 
Observe that $\widehat{\mathcal{Z}}(\mathcal{G})$ is also the quotient of $\overline{\mathcal{Z}}(\mathcal{G})$ modulo the ideal $\overline{\mathcal{Z}}(\mathcal{G})\cdot \II^{W_\Theta}$ that is the ideal
generated by the image $\bar\rho(\II^{W_\Theta})$.
By Lemma~\ref{lem:charm} we then obtain 
\begin{cor}
The map $\iota^*$ induces the map on the quotients $\hat\iota^*\colon \widehat{\mathcal{Z}}(\mathcal{G}) \to \widehat{\mathcal{Z}}(\mathcal{G}_\tau)$
\end{cor}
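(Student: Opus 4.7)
My plan is to factor everything through the augmented versions. Recall the observation made just before the corollary: $\widehat{\mathcal{Z}}(\mathcal{G})$ is the quotient of $\overline{\mathcal{Z}}(\mathcal{G})$ by the ideal generated by the image $\bar\rho(\II^{W_\Theta})$, and similarly $\widehat{\mathcal{Z}}(\mathcal{G}_\tau)$ is the quotient of $\overline{\mathcal{Z}}(\mathcal{G}_\tau)$ by the ideal generated by $\bar\rho_\tau(\II_\tau^{W_{\Theta_\tau}})$. Since we already have the ring map $\bar\iota^*\colon \overline{\mathcal{Z}}(\mathcal{G}) \to \overline{\mathcal{Z}}(\mathcal{G}_\tau)$ from Theorem~\ref{thm:mainthm}, it suffices to check that $\bar\iota^*$ sends the first ideal into the second; the induced map on the quotients will then be the desired $\hat\iota^*$.

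The key step is the set-theoretic inclusion $\pi_\tau(\II^{W_\Theta}) \subseteq \II_\tau^{W_{\Theta_\tau}}$. That $\pi_\tau$ maps $\II$ into $\II_\tau$ is clear (and is the ingredient already used in the proof of Theorem~\ref{thm:mainthm}). For the invariance part, recall that under the identification $W_\tau \subseteq W$ discussed in Section~4, the stabilizer $W_{\Theta_\tau}$ of $\mu=\pi_\tau(\theta)$ coincides with $(W_\Theta)_\tau$ (Lemma~\ref{lem:domvect}), and in particular $W_{\Theta_\tau}\subseteq W_\Theta$. For any $s\in \II^{W_\Theta}$ and $w\in W_{\Theta_\tau}$, Lemmas~\ref{lem:commuting} and \ref{lem:invcomm} give $w(\pi_\tau(s))=\pi_\tau(w(s))=\pi_\tau(s)$, so $\pi_\tau(s)\in \II_\tau^{W_{\Theta_\tau}}$ as claimed.

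Now the front face of the commutative cube in Lemma~\ref{lem:charm} reads
\[
\bar\iota^* \circ \bar\rho = \bar\rho_\tau \circ \pi_\tau
\]
on $\II^{W_\Theta}$, where we interpret $\pi_\tau$ as the induced map $k\otimes_{S^W} S^{W_\Theta} \to l\otimes_{S_\tau^{W_\tau}} S_\tau^{W_{\Theta_\tau}}$. Combined with the previous paragraph this yields $\bar\iota^*(\bar\rho(\II^{W_\Theta}))\subseteq \bar\rho_\tau(\II_\tau^{W_{\Theta_\tau}})$, and since $\bar\iota^*$ is a ring homomorphism it sends the whole ideal generated by $\bar\rho(\II^{W_\Theta})$ into the ideal generated by $\bar\rho_\tau(\II_\tau^{W_{\Theta_\tau}})$. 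Passing to the quotients produces the induced ring map $\hat\iota^*$, completing the proof.

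There is no real obstacle here: the entire content sits in Lemma~\ref{lem:charm} together with the compatibility $W_{\Theta_\tau}=(W_\Theta)_\tau$. The only point to handle carefully is the bookkeeping of which invariance group acts on which polynomial ring, so that the inclusion $\pi_\tau(\II^{W_\Theta})\subseteq \II_\tau^{W_{\Theta_\tau}}$ is applied in the correct direction.
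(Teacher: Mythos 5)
Your proof is correct and follows exactly the route the paper intends: the paper's entire proof is the phrase ``By Lemma~\ref{lem:charm} we then obtain,'' and your argument is a correct unpacking of that citation, using the front face $\bar\iota^*\circ\bar\rho=\bar\rho_\tau\circ\pi_\tau$ of the diagram together with the inclusion $\pi_\tau(\II^{W_\Theta})\subseteq\II_\tau^{W_{\Theta_\tau}}$ (which rests on $W_{\Theta_\tau}=(W_\Theta)_\tau$ and the commutation of $\pi_\tau$ with the reflections).
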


\begin{rem}
In the case of a crystallographic root system and $\Theta=\emptyset$
the reduced structure algebra $\widehat{\mathcal{Z}}(\mathcal{G})$ can be identified with the Chow ring $CH(G;k)$ of the associated adjoint linear algebraic group
$G$ (see e.g. \cite[\S4]{Gr58} and \cite[\S3, Example~2]{Br98} when $k=\Z$).
Then the reduction map is simply a pull-back (here $G$ acts diagonally on the product)
\[
CH_G(G/B\times G/B;k)=CH_T(G/B;k) \to CH(G;k)=CH_G(G\times G;k).
\]
\end{rem}

\begin{ex} Consider the classical folding $A_{2n-1} \mapsto C_n$, $n\ge 2$ with $k=\Z[\tfrac{1}{2}]$.
Suppose $\Theta=\emptyset$ (the Borel case).

Since $2$ is the only torsion prime of the adjoint form of the associated group $G$ of type $C_n$, the maps $\iota^*$, $\bar\iota^*$
restricted to characteristic subrings
are surjective as well as the map $\hat\iota^*$.

More precisely, for the type $A_{2n-1}$ the algebra $\overline{\mathcal{Z}}(\mathcal{G})$ can be identified with the Chow ring $CH(PGL_{2n}/B;k)$ of the variety of complete flags which is generated by the classes of divisors $\{D_i\}_i$ of line bundles corresponding to fundamental weights $\{\omega_i\}_i$. On the other side it is also isomorphic to the coinvariant algebra 
$k[\omega_1,\ldots,\omega_{2n}]_W$ which is the quotient of the polynomial ring
in $\omega_i$s modulo the ideal generated by $W$-invariant polynomials with trivial constant terms.

Since $\Lambda$ is the root lattice, the characteristic subring (the image of $\bar\rho$) is
the subring of $CH(PGL_{2n}/B;k)$ generated by linear combinations $\{\sum_j c_{ij} D_j\}_i$, where $c_{ij}$ are coefficients of the Cartan matrix (i.e. $\alpha_i=\sum_{j}c_{ij}\omega_j$ for a simple root $\alpha_i$). On the other side it is also isomorphic to the coinvariant algebra $k[\alpha_1,\ldots,\alpha_{2n}]_W$. 

The map $\bar\iota^*$ is a surjective $k$-algebra homomorphism onto the Chow ring of the variety of complete symplectic flags:
\[\bar\iota^*\colon k[\alpha_1,\ldots,\alpha_{2n}]_W \longrightarrow CH(PSp_{2n}/B;k),\; k=\Z[\tfrac{1}{2}].\]
The map $\hat\iota^*$ is just the trivial map
\[
\hat\iota^*\colon CH(PGL_{2n};k) \to CH(PSp_{2n};k)=k.
\]
\end{ex}

\begin{ex}
Consider the classical folding $E_6 \to F_4$ with $k=\Z[\tfrac{1}{2}]$.
In this case we obtain a map $\hat\iota^*\colon CH(PE_6;k) \to CH(F_4;k)$ from the adjoint group of type $E_6$.
According to \cite[Cor.~6.2 and (6.12)]{DZ15} we have
\[
CH(PE_6;k)\simeq k[\omega_1,x_4]/(3\omega_1,3x_4,\omega_1^9,x_4^3),
\]
where $\omega_1\in CH^1(PE_6/B)$ and $x_4\in CH^4(PE_6/B)$; and 
\[
CH(F_4;k)\simeq k[y_4]/(3y_4,y_4^3), 
\]
where $y_4\in CH^4(F_4/B)$. Moreover, the description of the classes $x_4$ and $y_4$ of \cite[(3.3)]{DZ15} implies that 
 $\hat\iota^*$ maps the class $x_4$ from the image $CH^4(PE_6/P_{A_6}) \hookrightarrow CH^4(E_6/B)$ 
to the class $y_4$ from the image $CH^4(F_4/P_{C_3}) \hookrightarrow CH^4(F_4/B)$. 
Hence, the map $\hat \iota^*$ is surjective.
\end{ex}

Consider again the Lusztig folding of Example~\ref{ex:mapH}. It restricts to the map from the Chow ring of the group $G$ of type $E_8$:
\[
\hat\iota^*\colon CH(E_8;k) \longrightarrow \widehat{Z}(H_4;l),\quad k=\Z[\tfrac{1}{5}],\; l=k(\tau).
\]
By \cite[Cor.~6.2]{DZ15} we have
\begin{equation}\label{eq:forme8}
CH(E_8;k)\simeq \frac{k[x_3,x_4,x_5,x_9,x_{10},x_{15}]}
{(2x_3,3x_4,2x_5,2x_9,3x_{10},2x_{15},x_3^8,x_4^3,x_5^4,x_9^2,x_{15}^2-x_{10}^3)},
\end{equation}

Assume that $\hat\iota^*$ is surjective and
the reduced structure algebra $\widehat{Z}(H_4;l)$ 
has a mod $p$ presentation similar to that of  \cite[Thm.~3.(ii)]{Ka85}, i.e.
\begin{itemize}
\item[(i)]
For each torsion prime $p=2,3$ a $p$-generator $y_{d_{i,p}}$ of degree $d_{i,p}$ 
satisfies the relation $y_{d_{i,p}}^{p^{k_i}}=0$, where $(p,d_{i,p})=1$ and 
$d_i=d_{i,p}p^{k_i}$ is the degree of a basic polynomial invariant over $\mathbb{R}$.
\item[(ii)] $|H_4|/p^{v_p(|H_4|)}$ divides the product of degrees $d_{i,p}$'s, where $v_p(m)$ denotes the maximal power of $p$ in $m$.
 (cf. \cite[(3.7)]{KP85}).
\end{itemize}

\begin{rem} We expect that following the arguments of~\cite{Ka85}, \cite{KP85} and \cite{NNS} one can verify the conditions (i) and (ii).
Namely, first one shows that the kernel of the characteristic map $\bar\rho_\tau \colon S_\tau \to \overline{\mathcal{Z}}(\mathcal{G}_\tau)$ modulo $p$ coincides 
with the ideal of generalized invariants of \cite[(0.2)]{KP85}, hence, generalizing \cite[(1)]{Ka85} to all real finite reflection groups (see also \cite[Thm.~2.1]{NNS}).
Then (i) and (ii) should follow from the arguments similar to those of \cite{KP85}.
\end{rem}

Taking the image of the presentation \eqref{eq:forme8} under $\hat\iota^*$ we obtain the following (conjectural)
answer for the virtual Chow ring of $H_4$ with coefficients in $l$:
\begin{equation}
\widehat{\mathcal{Z}}(H_4;l)\simeq \frac{l[y_3,y_4,y_5,y_{10},y_{15}]}
{(2y_3,3y_4,2y_5,3y_{10},2y_{15},y_3^4,y_4^3,y_5^4,y_{15}^2-y_{10}^3)},
\end{equation}
Indeed, polynomial invariants over $\mathbb{R}$
of the group $H_4$ of order $14400=2^6 3^25^2$ have degrees $2,12,20,30$ (see e.g. \cite[p.75]{Hi82}). Hence, 
2-generators of $\widehat{Z}(H_4;l)$ have degrees $3,5,15$ and exponents 4,4,2 and 
$3$-generators have degrees $4,10$ and exponents $3,3$ respectively.

Under the same assumptions, for the Lusztig foldings $D_6\to H_3$ and $A_4\to H_2$ we obtain the following
(conjectural) answers for the virtual Chow rings of $H_2$ and $H_3$ respectively:
\begin{equation}
\widehat{\mathcal{Z}}(H_2;l)\simeq \{1\}\quad \text{ and }\quad \widehat{\mathcal{Z}}(H_3;l)\simeq \frac{l[y_3,y_5]}{(2y_3,2y_5,y_3^2,y_5^2)}.
\end{equation}

\bibliographystyle{alpha}

%%%%%%%%%%%%%%%%%%%%%%%%%%%%%
\end{document}